\newtheorem{teo}{Theorem}
\newtheorem{prop}[teo]{Proposition}
\newtheorem{lema}[teo]{Lemma}
\newtheorem{cor}[teo]{Corollary}
\theoremstyle{definition} %coloca o título em negrito e o corpo normal
\newcommand{\RR}{(\mathbb{R}^{n})}
\DeclareMathOperator{\Conv}{Conv}
\DeclareMathOperator{\I}{I}
\DeclareMathOperator{\dom}{dom}
\DeclareMathOperator{\interior}{int}
\DeclareMathOperator{\MA}{MA}
\DeclareMathOperator{\D}{D^2}
\DeclareMathOperator{\LC}{Conv_{ld}}
\DeclareMathOperator{\bd}{bd}
\DeclareMathOperator{\lip}{Conv_{lip}}
\newcommand{\finite}{(\mathbb{R}^n; \mathbb{R})}
\DeclareMathOperator{\supp}{supp}
\DeclareMathOperator{\conc}{Conc([0,\infty))}
\newcommand{\dif}{\mathop{}\!\mathrm{d}}
\title{Asymptotic Weighted Approximation of  Convex Functions}
\author{Fernanda Moreira Baêta%
  \thanks{Institut für Diskrete Mathematik und Geometrie, Technische Universität Wien, Wiedner Hauptstraße 8-10/1046, 1040 Wien, Austria. 
  Email address: \texttt{fernanda.baeta@tuwien.ac.at}}
}
\date{} 
\begin{document}
\maketitle
\pretolerance10000
\begin{abstract}
Extending classical results on polytopal approximation of  convex bodies, we derive asymptotic formulas for the weighted approximation  of smooth convex  functions by  piecewise affine convex functions as the number of their facets
tends to infinity. These asymptotic expressions are formulated in terms of a functional that extends the notion of affine surface area to the functional setting.
\end{abstract} \maketitle

\section{Introduction}
The \textit{affine surface area}  of a convex body (i.e., a non-empty compact convex set) $C\subset\mathbb{R}^{n+1}$ is defined by
\begin{align*}
    \Omega(C)= \int_{\bd C}\kappa_{C}(x)^\frac{1}{n+2}\dif \mathcal{H}^{n}(x),
\end{align*}
where   $\kappa_C$ denotes the  (generalized) Gaussian curvature of the convex body $C$, $\bd C$ is its topological boundary,  and $\mathcal{H}^{n}$ is the $n$-dimensional Hausdorff measure. In the theory of polytopal approximation, the affine surface area provides valuable insight into the quality of approximation of a convex body by polytopes.   Let $\delta(\cdot, \cdot)$ denote the \textit{symmetric difference metric} on the set of convex bodies $\mathcal{K}^{n+1}$,  i.e.,  $\delta(C, D)$ for $C,D\in \mathcal{K}^{n+1}$ is the $(n+1)$-dimensional volume of their symmetric difference $C\cup D\backslash C\cap D$. For a convex body $C$, let
$\mathcal{P}_{(m)}^c= \mathcal{P}_{(m)}^c(C)$ denote the  class of all convex polytopes in $\mathcal{K}^{n+1}$ that are circumscribed to the convex body $C$  and have at most $m$ facets. In \cite{gruber1993asymptotic}, Gruber showed that 
\begin{align}\label{gruber}
    \delta(C, \mathcal{P}_{(m)}^c)  \sim \frac{\delta_{ n}}{2}\Omega(C)^\frac{n+2}{n}\frac{1}{m^\frac{2}{n}}
\end{align}
as $m\rightarrow +\infty$, provided that  $C$ has a boundary of differentiability class $C^2_+$.  Here, the  constant $\delta_{n}$ depends only on the dimension $n+1$. However, its explicit value is not known for  $n>3$. Gruber also obtained an analogous result for the class of all convex polytopes with at most $m$ vertices in $\mathcal{K}^{n+1}$ that are inscribed in $C$ (see \cite{gruber1993asymptotic}).

The assumption   $\kappa_C>0$  was later removed by  Böröczky \cite{boroczky2000approximation}. Furthermore, Ludwig \cite{ludwig1999affine} showed that, in the case $n=2$, the requirement that $C$ be of class $C^2$ can also be omitted. In addition, Ludwig \cite{ludwig1999asymptotic} established general approximation results for the symmetric difference metric without restricting the approximating polytopes to be either inscribed  in or circumscribed around $C$, and also considered weighted cases.  For more information, see \cite[Chapter 11]{gruber2007convex}.

%Further related results can be found in \cite{gruber1982approximation, mcclure1975polygonal, fejes1972lagerungen}.

Recently, notions from convex geometry have been adapted to functions, opening new perspectives on convex function spaces.
For a convex function $u:\mathbb{R}^n\to (-\infty, +\infty]$, the (effective) \textit{domain} of $u$ is defined by $\dom u=\{x\in\mathbb{R}^n: u(x)<+\infty\}$. Throughout, $\|x\|$ denotes the Euclidean norm of $x \in \mathbb{R}^n$, and $\interior(A)$ denotes the interior of a set $A\subset \mathbb{R}^n$.

Let
\begin{multline}\label{def_set}
 \LC\RR=\{u:\mathbb{R}^n\to (-\infty, +\infty]\mid \ \dom u \text{ is a compact set, } \\
 u \text{ is Lipschitz on  } \interior (\dom u)\},   
\end{multline}
denote the set of convex functions $u: \mathbb{R}^n\to (-\infty,+\infty]$ with compact domain that are Lipschitz on the interior of their domains.  Note that if $\dom u$ is not full-dimensional, $u$ is not required to be Lipschitz. The space $\LC(\mathbb{R}^n)$ is endowed with the following notion of convergence.  
For $u, u_k \in \LC(\mathbb{R}^n)$, $k \in \mathbb{N}$, we say that  $u_k$ is $\tau$-convergent to $u$ if
\begin{itemize}
\item[(i)] $u_k$ epi-converges to $u$;
\item[(ii)] the Lipschitz contants $L_{u_k}$ of $u_k$ are uniformly bounded by some $M > 0$ independent of $k$.
\end{itemize}
See Section~\ref{tools} and \cite{zador1982asymptotic} for details on epi-convergence.
\medskip

Consider the following class of functions
\begin{align*}
\conc=
\left\{\zeta: [0,\infty)\rightarrow [0,\infty) \mid \ \zeta \ \mbox{is concave}, \ \lim_{t\rightarrow 0}\zeta(t)=0,   \lim_{t\rightarrow \infty}\zeta(t)/t =0\right\}.    
\end{align*}
For $u \in \LC\RR$ and $\zeta \in \conc$, we define the functional
\begin{align}\label{affine}
    Z_{\zeta}(u)= \int_{\dom u} \zeta(\det \D u(x))\dif x,
\end{align}
where $\det \D u(x)$ is the Hessian matrix of $u$ at $x$.
By a classical result of Aleksandrov \cite{aleksandrov1939second}, convex functions are twice differentiable almost everywhere, and hence their Hessians exist almost everywhere in the domain. For the special case $\zeta(t) = t^{\frac{1}{n+2}}, t\geq 0$, the functional \eqref{affine} coincides with the affine surface area of the graph of the convex function $u \in \LC\RR$, with the convention that $\det \D u(x) = 0$ at points where $u$ is not twice differentiable (see \cite{trudinger2000bernstein}). Moreover, by \cite[Theorem~1]{baeta_ludwig_semicontinuity}, the functional \eqref{affine} is finite and $\tau$-upper semicontinuous, that is, if $u_k\in \LC\RR$ is $\tau$-convergent to $u\in \LC\RR$, then
\begin{align*}
 Z_{\zeta}(u)\geq \limsup_{k\to +\infty} Z_{\zeta}(u_k). 
\end{align*}

We say that $\mathcal{T}$ is a triangulation of a bounded set $B\subset \mathbb{R}^n$ if it decomposes $B$ into $m$-simplices contained in $B$ such that the intersection of any two simplices in $\mathcal{T}$ is either a common lower-dimensional simplex or empty.    Note that such a triangulation determines a set of vertices inside $B$, and the union of the simplices is not required to be equal $B$.   Let $P_m = P_m(v)$ denote the class of piecewise affine interpolations of $v: B\to\mathbb{R}$ on a triangulation $\mathcal{T}\in \mathcal{T}_m$   on a bounded convex set $B\subset \mathbb{R}^n$, where $\mathcal{T}_m$ is the set of all triangulations with at most $m$ vertices, and  the diameter of each element in $\mathcal{T}_m$ tends to zero as $m \to +\infty$ (see \cite{chen2004optimal} for details). In \cite{chen2004optimal}, Chen and Xu  considered the approximation 
\begin{align*}
    \triangle_p(v, P_{m})=\min\left\{\int_{B} |v(x)-l_m(x)|^{p}\,\mathrm{d}x: l_m\in P_{m}\right\}, \qquad p\geq 1.
\end{align*}
They proved that if $v: B\subset \mathbb{R}^n\to \mathbb{R}$ is a convex function of class $C^2_+(B)$, then there exists a constant $\bar{\delta}_{p,n}$, depending only on $n$ and $p$, such that  
\begin{align*}
      \triangle_p(v, P_{m}) \sim
     \bar{\delta}_{p,n}\left(\int_{B} (\det \D v(x))^{\frac{p}{n+2p}}\,\mathrm{d}x\right)^{\frac{n+2p}{n}}\frac{1}{m^{\frac{2p}{n}}}.    
\end{align*}
In \cite{chen2007optimal}, Chen, Sun, and Xu obtained an analogous result for $p = +\infty$.   Babenko et al.~\cite{babenko2014exact}  derived the exact asymptotics of the optimal weighted error for linear spline interpolation of $C^2_+$ functions in $\mathbb{R}^2$, for $0 < p < +\infty$. In a related work, Babenko~\cite{babenko2010exact} studied the uniform interpolation error of $C^2_+$ functions by multilinear splines, i.e., the maximum pointwise deviation between a function and its interpolant over a uniform rectangular partition of a cube (or rectangular domain), and derived its exact asymptotic behavior as the mesh is refined.

% Babenko et al. \cite{babenko2014exact} derived the exact asymptotics of the optimal weighted error for linear spline interpolation of $C^2_+$ functions in $\mathbb{R}^2$, for $0 < p < +\infty$, while in \cite{babenko2010exact}, Babenko considered the exact asymptotics of the uniform interpolation error by multilinear splines.

We aim to extend the theory of polytopal approximation from convex bodies to convex functions with compact domain. Let $u:\mathbb{R}^n\to (-\infty,+\infty]$ be a convex function with compact domain. We consider the class 
\[
P_{(m)}^c = P_{(m)}^c(u)
\]
consisting of piecewise affine functions $l_m$ circumscribed to $u$, i.e., satisfying   $l_m(x)~\le~u(x)$ for all $x \in \dom u$, and which can be written as the maximum of at most $m$ affine functions, that is, there exist finitely many affine functions $\psi_1, \dots, \psi_m: \mathbb{R}^n\to \mathbb{R}$ such that
\begin{align*}
 l_m(x)= \max_{1 \le j \le m} \psi_j(x), \quad  x \in \mathbb{R}^n.
\end{align*}
Let $\omega : \mathbb{R}^{n+1} \to \mathbb{R}$ be a continuous positive  function. Our goal is to estimate the weighted approximation 
\begin{align}\label{minimum}
    \triangle_p(u, P_{(m)}^c, \omega) 
    = \min \Bigl\{ \int_{\dom u} (u(x) - l_m(x))^p \, \omega(x,u(x)) \, \mathrm{d}x : l_m \in P_{(m)}^c \Bigr\},
\end{align}
for sufficiently large $m$ and $p>0$.

This problem can be seen as a functional analogue of classical results on the approximation of convex bodies by circumscribed polytopes (cf. \eqref{gruber}). In contrast to interpolation methods based on inscribed triangulations of the domain, the approximating functions considered here are defined as the maximum of a fixed number of circumscribed affine functions.  This perspective not only extends Gruber’s approach \cite[Theorem~11.4]{gruber2007convex} to the functional setting, but also establishes a natural connection with the $\zeta$-affine surface area functional $Z_\zeta(u)$ introduced in \eqref{affine}, since the asymptotic behavior of $\triangle_p$ is governed by a power of the determinant of the Hessian of $u$, weighted appropriately.  We also analyze   the weighted approximation in the dual space of $\LC(\mathbb{R}^{n})$ via the Legendre transform. In particular, for the weight $\omega(x,t) = e^{-t}$, $(x,t) \in \mathbb{R}^{n+1}$, the asymptotic behavior is closely related to the \emph{weighted functional affine surface area} introduced in \cite{SchuttThaeleTurchiWerner2024}.  
The precise statement is discussed in Section~\ref{dc}.

% Following the approach of Gruber in \cite[Theorem 11.4]{gruber2007convex}
\begin{teo}\label{teoprin}
Let $u \in \LC\RR$ be of class $C^2_+$ in $\interior (\dom u)$, and let $\omega : \mathbb{R}^{n+1} \to \mathbb{R}$ be continuous and positive.  
Then there exists a constant $\delta_{p,n}$, depending only on $n$ and $p>0$, such that
\begin{align*}
    \lim_{m \to +\infty} m^{\frac{2p}{n}} \triangle_p(u, P_{(m)}^c, \omega) 
    = \frac{\delta_{p,n}}{2^p} \left( \int_{\dom u} (\det \D u(x))^{\frac{p}{n+2p}} \, \omega(x,u(x))^{\frac{n}{n+2p}} \, \mathrm{d}x \right)^{\frac{n+2p}{n}}.
\end{align*}
\end{teo}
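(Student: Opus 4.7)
The plan is to extend Gruber's strategy for polytopal approximation \cite{gruber1993asymptotic,gruber2007convex} to the weighted functional setting. The proof combines three ingredients: (i) a local reduction of the circumscribed approximation of $u$ to that of a paraboloid; (ii) a H\"older--Lagrange argument for the asymptotically optimal distribution of the $m$ affine pieces across $\dom u$; and (iii) a boundary-layer argument using the Lipschitz hypothesis near $\bd(\dom u)$. For the local reduction, since $u$ is $C^2_+$ on $\interior(\dom u)$, Taylor's formula gives
\[
u(x) - [u(x_0) + \nabla u(x_0)\cdot(x-x_0)] = \tfrac{1}{2}\langle \D u(x_0)(x-x_0),\, x-x_0 \rangle + o(\|x-x_0\|^2).
\]
For positive definite $H$, the substitution $z = H^{1/2}y$ reduces the circumscribed approximation of $\tfrac{1}{2}\langle Hy,y\rangle$ on a set $D$ by $k$ affine pieces to that of $\tfrac{1}{2}\|z\|^2$ on $H^{1/2}D$; tracking the Jacobian shows that the asymptotically optimal weighted $L^p$-error equals
\[
\frac{\delta_{p,n}}{2^p}\,(\det H)^{p/n}\,|D|^{(n+2p)/n}\,k^{-2p/n},
\]
where $\delta_{p,n}$ is the universal constant obtained for $\tfrac{1}{2}\|z\|^2$ on a unit-volume domain.

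For the upper bound, fix $\varepsilon > 0$, excise an $\varepsilon$-neighborhood of $\bd(\dom u)$, and partition the remaining compact set $K_\varepsilon$ into small cells $D_i$ with reference points $x_i \in D_i$. Assign $k_i$ tangent-plane pieces to $D_i$ with $\sum_i k_i \le m$; by the local reduction, the contribution of $D_i$ is approximately $a_i k_i^{-2p/n}$, where
\[
a_i = (\delta_{p,n}/2^p)\,\omega(x_i, u(x_i))\,(\det \D u(x_i))^{p/n}\,|D_i|^{(n+2p)/n}.
\]
Minimizing $\sum_i a_i k_i^{-2p/n}$ subject to $\sum_i k_i = m$ via a Lagrange multiplier yields the allocation $k_i \propto a_i^{n/(n+2p)}$ and the minimum value $\bigl(\sum_i a_i^{n/(n+2p)}\bigr)^{(n+2p)/n} m^{-2p/n}$. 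Letting the mesh size shrink converts the Riemann sum into
\[
(\delta_{p,n}/2^p)^{n/(n+2p)}\int_{K_\varepsilon}\omega(x,u(x))^{n/(n+2p)}\,(\det \D u(x))^{p/(n+2p)}\,\dif x,
\]
and $\varepsilon \to 0$ produces the full integral. The boundary layer contributes $O(\varepsilon)$ by the Lipschitz bound on $u$.

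For the lower bound, given any $l_m = \max_{1 \le j \le m} \psi_j \in P_{(m)}^c$, the active regions $D_j = \{x \in \dom u : l_m(x) = \psi_j(x)\}$ decompose $\dom u$. On each $D_j$, since $\psi_j \le u$ is affine and $u$ is $C^2_+$, the local error $\int_{D_j}(u-\psi_j)^p\,\omega\,\dif x$ is bounded below (up to lower-order terms as $m \to +\infty$) by the single-piece paraboloid-optimal quantity $(\delta_{p,n}/2^p)\,\omega(x_j, u(x_j))\,(\det \D u(x_j))^{p/n}\,|D_j|^{(n+2p)/n}$, where $x_j$ is a contact point of $\psi_j$. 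Summing over $j$ and applying H\"older's inequality with exponents $(n+2p)/n$ and $(n+2p)/(2p)$ yields the matching lower bound.

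The main obstacle is this lower bound: the partition $\{D_j\}$ is dictated by $l_m$ itself rather than freely chosen, so one needs Gruber-type \emph{economic cap} estimates to guarantee that no single affine piece $\psi_j$ can beat the paraboloid-optimal constant on its active cell $D_j$, regardless of the cell's shape. A secondary technical issue is the uniform control near $\bd(\dom u)$: since $u$ is only Lipschitz there, the $\varepsilon$-layer must be handled independently of $m$, so both bounds are first established for $m \to +\infty$ with fixed $\varepsilon$, and then one lets $\varepsilon \to 0$.
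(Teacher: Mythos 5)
Your overall route---Taylor expansion to a quadratic form, a Zador-type constant for quadratic forms, proportional allocation of the $m$ pieces, and a Jensen/H\"older combination---is the same as the paper's (which follows Gruber's scheme), but two of your specific steps do not work as stated. The serious one is your lower bound. You propose to bound the error of each affine piece $\psi_j$ from below on its active cell $D_j$ by $(\delta_{p,n}/2^p)\,\omega(x_j,u(x_j))\,(\det \D u(x_j))^{p/n}\,|D_j|^{(n+2p)/n}$ ``regardless of the cell's shape''. No such per-cell estimate with the constant $\delta_{p,n}$ exists: $\delta_{p,n}$ is the \emph{asymptotic} (Zador) constant of the many-point problem, whereas for a single contact point the shape-optimal cell is (an affine image of) a ball, whose normalized constant is strictly smaller than $\delta_{p,n}$ for $n\ge 2$; so a single piece on a ball-shaped active cell does beat your claimed constant, and ``economic cap'' estimates cannot restore it cell by cell. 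The paper never argues per cell: it fixes finitely many Jordan measurable regions $J_i$ carrying at least a $1/\lambda$ fraction of the integral, freezes $q_{a_i}$ and $\omega_{a_i}$ there, bounds the sum over all active cells meeting $J_i$ below by $\inf_{\#S=d_i^m}\int_{J_i}\min_{t\in S}\bigl(q_{a_i}(x-t)\bigr)^p\,\omega\,\dif x$, applies the Zador lemma as $d_i^m\to+\infty$, and only then combines the regions via Jensen's inequality under $d_1^m+\cdots+d_l^m\le m$. It is this aggregation over whole regions, not a sharp single-cell inequality, that produces $\delta_{p,n}$ in the lower bound; your proposal is missing exactly this step.

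The second gap is the boundary layer in your upper bound. You excise an $\varepsilon$-neighborhood of the boundary, place all tangent planes in $K_\varepsilon$, let $m\to+\infty$ for fixed $\varepsilon$, and claim the layer contributes $O(\varepsilon)$. But that contribution is independent of $m$, so $m^{2p/n}\triangle_p(u,P_{(m)}^c,\omega)$ diverges for fixed $\varepsilon$; the order of limits you prescribe cannot yield $\limsup_m m^{2p/n}\triangle_p\le\cdot$. Nor does the Lipschitz bound alone repair this by assigning some pieces to the layer: it controls the gap to a tangent plane only to first order (proportional to the cell diameter, not its square), giving a rate $m^{-p/n}$ there. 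The paper avoids the issue by not excising anything: Proposition 11 partitions all of $X=\dom u$ into cells $D_i\subseteq L_i$ on which $q_x$, $\det q_x$ and $\omega$ are $\lambda$-comparable to their values at $a_i$, allocates $d_i^m=\lfloor\tau_i m\rfloor$ tangent planes to \emph{every} cell (the same proportional allocation you obtain by Lagrange multipliers), and estimates each cell at the rate $m^{-2p/n}$, so the behavior near the boundary is handled through the Hessian/weight comparability on the cells rather than through an $\varepsilon$-layer. If you insist on the excision, you need a genuinely different argument for the layer, which your sketch does not supply.
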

\noindent The constant $\delta_{p,n}$ is given by Zador's Theorem (see Section \ref{tools}), and for $p=1$ it coincides with the constant given in \eqref{gruber}. 

%We will see in Section \ref{dc} that for $\omega(x,u(x))=e^{-u(x)}$ this result is related in some sense with the \emph{weighted functional affine surface area} introduced in  \cite{SchuttThaeleTurchiWerner2024}.
\medskip

In particular, by choosing the weight function $\omega \equiv 1$, Theorem \ref{teoprin} immediately yields the corresponding asymptotic result for the unweighted case.  

\begin{cor}
Let $u \in \LC(\mathbb{R}^n)$ be of class $C^2_+$ in $\interior(\dom u)$.  
Then  there exists a constant $\delta_{p,n}$, depending only on $n$ and $p>0$, such that
\begin{align*}
    \lim_{m \to +\infty} m^{\frac{2p}{n}} \triangle_p(u, P_{(m)}^c, 1) 
    = \frac{\delta_{p,n}}{2^p} \left( \int_{\dom u} (\det \D u(x))^{\frac{p}{n+2p}} \, \mathrm{d}x \right)^{\frac{n+2p}{n}}.
\end{align*}
\end{cor}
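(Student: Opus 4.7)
The plan is to observe that this statement is a direct specialization of Theorem \ref{teoprin}, so no new argument is required. First I would verify the hypothesis on the weight: the constant function $\omega \equiv 1$ on $\mathbb{R}^{n+1}$ is trivially continuous and strictly positive, so the standing assumptions of Theorem \ref{teoprin} are met for any $u \in \LC\RR$ that is of class $C^2_+$ on $\interior(\dom u)$.

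Then I would apply Theorem \ref{teoprin} to the pair $(u, \omega)$ with $\omega \equiv 1$. The conclusion gives a dimensional constant $\delta_{p,n}$ (depending only on $n$ and $p$, the same constant as in the theorem and coinciding with the Zador constant) and the limit
\begin{align*}
    \lim_{m \to +\infty} m^{\frac{2p}{n}} \triangle_p(u, P_{(m)}^c, 1)
    = \frac{\delta_{p,n}}{2^p} \left( \int_{\dom u} (\det \D u(x))^{\frac{p}{n+2p}} \cdot 1^{\frac{n}{n+2p}} \, \mathrm{d}x \right)^{\frac{n+2p}{n}}.
\end{align*}
Since $1^{n/(n+2p)} = 1$, the integrand simplifies to $(\det \D u(x))^{p/(n+2p)}$, matching the corollary verbatim.

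There is no genuine obstacle here: every substantive ingredient — the construction of near-optimal circumscribed piecewise affine envelopes, the identification of the local error density via Zador's theorem, and the global asymptotic assembly in the presence of a continuous positive weight — is already handled inside the proof of Theorem \ref{teoprin}. Specializing to $\omega \equiv 1$ only amounts to noting that the weight factor $\omega(x,u(x))^{n/(n+2p)}$ equals $1$ pointwise on $\dom u$ and does not alter the constant $\delta_{p,n}$.
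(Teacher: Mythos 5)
Your proposal is correct and matches the paper exactly: the corollary is stated there as an immediate specialization of Theorem~\ref{teoprin} to the constant weight $\omega \equiv 1$, which is continuous and positive, and the factor $1^{n/(n+2p)}=1$ disappears from the integrand. No further argument is needed.
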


For $p=1$, we obtain the  affine surface area  that corresponds to the graph of the convex function $u\in \LC(\mathbb{R}^n)$.

\section{Preliminaries}\label{tools}
In this section, we collect background results that will be needed throughout the paper. 
\paragraph{Zador’s Theorem.}
We begin with the constant $\delta_{n}$ appearing in inequality \eqref{gruber}, which is connected to Zador’s Theorem for $\alpha = 1$.
\begin{teo}[\cite{zador1982asymptotic}, Zador's Theorem]\label{zt}
Let $\alpha > 0$. Then there exists a constant $ \delta_{2\alpha, n} > 0$, depending only on $\alpha$ and $n$, such that for any Jordan measurable compact set $J \subset \mathbb{R}^n$ with $V_n(J) > 0$,
\begin{align*}
    \lim_{m \to +\infty} m^{\frac{2\alpha}{n}} 
    \inf_{\substack{S \subset J \\ \# S = m}} 
    \int_J \min_{s \in S} \|x-s\|^{2\alpha} \, \mathrm{d}x 
    = \delta_{2\alpha, n} \, V_n(J)^{\frac{n+2\alpha}{n}}.
\end{align*}
\end{teo}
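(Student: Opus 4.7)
The plan is to prove Zador's theorem in three stages: elementary properties of the quantization error, existence of the limit on the unit cube via a Fekete-type subadditivity argument, and extension to a general Jordan measurable set by partitioning into small cubes.

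\textbf{Stage 1 (Elementary properties).} Set $e_m(J):=\inf_{|S|=m,\,S\subset J}\int_J \min_{s\in S}\|x-s\|^{2\alpha}\,\mathrm{d}x$. First I would record the basic facts that $e_m(J)$ is non-increasing in $m$, translation-invariant, and satisfies $e_m(\lambda J)=\lambda^{n+2\alpha}e_m(J)$ by the substitution $x\mapsto \lambda x$.

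\textbf{Stage 2 (Unit cube).} For $Q=[0,1]^n$, set $a_m:=m^{2\alpha/n}e_m(Q)$. Partitioning $Q$ into $k^n$ translates of $k^{-1}Q$ and placing an optimal $l$-point configuration in each subcube gives
\begin{align*}
    e_{k^n l}(Q)\;\le\; k^n\,e_l(k^{-1}Q)\;=\;k^n\cdot k^{-(n+2\alpha)}\,e_l(Q)\;=\;k^{-2\alpha}\,e_l(Q),
\end{align*}
whence $a_{k^n l}\le a_l$ for all $k,l\ge 1$. For a positive lower bound I would use spherical rearrangement to get $\int_{V_s}\|x-s\|^{2\alpha}\,\mathrm{d}x\ge c_n V_n(V_s)^{(n+2\alpha)/n}$ on each Voronoi cell $V_s\subset Q$, and then sum over the $m$ cells using Jensen's inequality on the convex map $t\mapsto t^{(n+2\alpha)/n}$, yielding $a_m\ge c_n>0$. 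Setting $\delta_{2\alpha,n}:=\inf_m a_m\in(0,\infty)$, I would, for fixed $\varepsilon>0$, choose $m_0$ with $a_{m_0}\le\delta_{2\alpha,n}+\varepsilon$; for arbitrary $m$ pick $k$ with $k^n m_0\le m<(k+1)^n m_0$ and combine $a_{k^n m_0}\le a_{m_0}$ with the monotonicity of $e_m$ to obtain
\begin{align*}
    a_m\;\le\;\Bigl(\tfrac{m}{k^n m_0}\Bigr)^{2\alpha/n}a_{k^n m_0}\;\le\;\Bigl(\tfrac{k+1}{k}\Bigr)^{2\alpha}(\delta_{2\alpha,n}+\varepsilon).
\end{align*}
Letting $m\to\infty$ (so $k\to\infty$) forces $\limsup_m a_m\le\delta_{2\alpha,n}$, giving $\lim_m a_m=\delta_{2\alpha,n}$.

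\textbf{Stage 3 (General Jordan measurable set).} By Jordan measurability, for each $\varepsilon>0$ I can choose finitely many disjoint closed cubes $Q_1,\ldots,Q_N\subset J$ of common small side $h$ with $V_n(J\setminus\bigcup_i Q_i)<\varepsilon$. For the upper bound, allocate $m_i=\lfloor m\,V_n(Q_i)/V_n(J)\rfloor$ points optimally inside each $Q_i$ (leftover points placed arbitrarily); applying Stage 2 cube-by-cube and bounding the contribution of the residual piece by $\mathrm{diam}(J)^{2\alpha}\varepsilon$ gives $\limsup_m m^{2\alpha/n}e_m(J)\le\delta_{2\alpha,n}V_n(J)^{(n+2\alpha)/n}+O(\varepsilon^{n/(n+2\alpha)})$. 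For the matching lower bound, I would, given any $m$-point set $S\subset J$, bound $\int_{Q_i}\min_{s\in S}\|x-s\|^{2\alpha}\,\mathrm{d}x$ from below using the nearest-point projection $\pi_{Q_i}(S)\subset Q_i$ (nonexpansiveness of projections onto convex sets decreases the minimum distance pointwise on $Q_i$) and the count $\nu_i$ of Voronoi cells of $S$ intersecting $Q_i$; Stage 2 converts this into a power of $\nu_i$, and a Lagrange-multiplier minimization of $\sum_i \nu_i^{-2\alpha/n}V_n(Q_i)^{(n+2\alpha)/n}$ subject to the effective constraint $\sum_i\nu_i\le m(1+o(1))$ (boundary losses controlled by $\varepsilon$) is attained at $\nu_i\propto V_n(Q_i)$ and yields $\liminf_m m^{2\alpha/n}e_m(J)\ge \delta_{2\alpha,n}V_n(J)^{(n+2\alpha)/n}-O(\varepsilon)$.

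\textbf{Main obstacle.} The decisive point is Stage 2: the inequality $a_{k^n l}\le a_l$ only constrains a sparse subsequence of indices, so passing to the full-sequence limit requires the delicate interpolation above, combining the monotonicity of $e_m$ with the asymptotic triviality of $(k+1)^{2\alpha}/k^{2\alpha}\to 1$. A secondary technicality in Stage 3 is making the lower-bound argument rigorous so that the \emph{sharp} constant $\delta_{2\alpha,n}$ appears (rather than the weaker ball constant $c_n$), which requires careful accounting for Voronoi cells that straddle cube boundaries.
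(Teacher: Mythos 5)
The paper contains no proof of this statement: Zador's theorem is quoted verbatim from \cite{zador1982asymptotic} and used as a black box (only its consequence, Lemma~\ref{lema5.1}, is invoked later), so there is nothing of the paper's to compare your argument against; I evaluate it on its own. Your outline is the standard self-similarity route. Stages 1 and 2 are sound: the scaling identity, the inequality $a_{k^n l}\le a_l$ from tiling the cube, the rearrangement-plus-Jensen lower bound $a_m\ge c_n>0$, and the interpolation from the sparse index set $\{k^n m_0\}$ to all $m$ via monotonicity of $e_m$ are all correct, and these are indeed the delicate points of that stage.

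Stage 3 has a genuine gap in the upper bound. You bound the contribution of the residual set $R=J\setminus\bigcup_i Q_i$ by $\operatorname{diam}(J)^{2\alpha}\,V_n(R)\le \operatorname{diam}(J)^{2\alpha}\varepsilon$. This quantity is independent of $m$, so after multiplying by $m^{2\alpha/n}$ it diverges, and the asserted conclusion $\limsup_m m^{2\alpha/n}e_m(J)\le\delta_{2\alpha,n}V_n(J)^{(n+2\alpha)/n}+O(\varepsilon^{n/(n+2\alpha)})$ does not follow from what you wrote. The standard repair is to spend an extra $\lceil\varepsilon m\rceil$ points on $R$: take a grid of mesh $h=m^{-1/n}$ and place one point of $R$ in each grid cube meeting $R$ (Jordan measurability of $R$ gives at most about $\varepsilon/h^n=\varepsilon m$ such cubes once $h$ is small), so that $\int_R\min_{s}\|x-s\|^{2\alpha}\,\mathrm{d}x\le n^{\alpha}\varepsilon\, m^{-2\alpha/n}$, and then renormalize the total point count from $(1+\varepsilon)m$ back to $m$ at the cost of a factor $(1+\varepsilon)^{2\alpha/n}$. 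Separately, in the lower bound the control $\sum_i\nu_i\le m(1+o(1))$ is not a secondary technicality but the hardest point of the whole theorem: a Voronoi cell straddling several cubes is counted once per cube, and nothing you have said rules out a positive fraction of cells doing so; as written this part is a plan rather than a proof. The projection step and the Hölder inequality $\sum_i\nu_i^{-2\alpha/n}V_n(Q_i)^{(n+2\alpha)/n}\ge\bigl(\sum_i V_n(Q_i)\bigr)^{(n+2\alpha)/n}\bigl(\sum_i\nu_i\bigr)^{-2\alpha/n}$ are fine once that count is established.
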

\noindent Here, $\#S$ denotes the cardinality of the set $S$, and $V_n(A)$ denotes the $n$-dimensional volume of a set $A\subset \mathbb{R}^n$.
A set $J \subset \mathbb{R}^n$ is called \emph{Jordan measurable} if its characteristic function is Riemann integrable.

From Theorem \ref{zt}, we obtain the following result.

\begin{lema}\label{lema5.1}
Let $J \subset \mathbb{R}^n$ be Jordan measurable with $V_n(J) > 0$, and let $q$ be a positive definite quadratic form on $\mathbb{R}^n$. Then
\begin{align*}
    \lim_{m \to +\infty} m^{\frac{2p}{n}} 
    \inf_{\substack{S \subset J \\ \# S = m}} 
    \int_J \min_{s \in S} |q(x-s)|^p \, \mathrm{d}x
    = \delta_{p,n} \, V_n(J)^{\frac{n+2p}{n}} (\det q)^{\frac{p}{n}}.  
\end{align*}
\end{lema}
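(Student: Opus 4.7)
The plan is to reduce Lemma~\ref{lema5.1} to Zador's Theorem via a linear change of variables that diagonalizes the quadratic form $q$. Since $q$ is positive definite, there exists a symmetric positive definite matrix $A$ with $q(y) = y^T A y$; setting $T = A^{1/2}$ gives $q(y) = \|Ty\|^2$, so $|q(x-s)|^p = \|Tx - Ts\|^{2p}$. This converts the problem into a Euclidean approximation problem on the image set $T(J)$.

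I would then perform the substitution $u = Tx$. Since $T$ is invertible linear, the correspondence $S \leftrightarrow T(S)$ is a bijection between $m$-point subsets of $J$ and $m$-point subsets of $T(J)$, so the infimum is preserved under the change of variables. The set $T(J)$ is Jordan measurable with $V_n(T(J)) = |\det T|\, V_n(J)$, and the Jacobian contributes an overall factor $|\det T|^{-1}$. Applying Theorem~\ref{zt} with $\alpha = p$ to the Euclidean problem on $T(J)$ then yields
\begin{align*}
\lim_{m \to +\infty} m^{\frac{2p}{n}} \inf_{\substack{S \subset J \\ \#S = m}} \int_J \min_{s \in S} |q(x-s)|^p \, \mathrm{d}x
= \frac{1}{|\det T|}\, \delta_{2p, n} \, V_n(T(J))^{\frac{n+2p}{n}}.
\end{align*}
Using $|\det T|^2 = \det A = \det q$, a short bookkeeping of exponents collapses the right-hand side to $\delta_{2p,n}\,(\det q)^{p/n}\, V_n(J)^{(n+2p)/n}$, which is the claimed identity (with the convention $\delta_{p,n} := \delta_{2p,n}$).

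I do not anticipate a serious obstacle, since the argument is essentially a direct reduction. The only points requiring minor care are verifying that the image of a Jordan measurable set under an invertible linear map remains Jordan measurable (its boundary is the image of the original boundary and so still has Lebesgue measure zero) and that Zador's Theorem applies to $T(J)$ for the standard Euclidean norm; both are routine. The cleanest way to present the argument is probably to write the change of variables explicitly at the level of a fixed configuration $S$, observe that the $\inf$ commutes with the bijection $S \mapsto TS$, and only then pass to the limit.
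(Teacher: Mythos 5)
Your reduction is correct and is exactly the intended derivation: the paper states Lemma~\ref{lema5.1} as an immediate consequence of Theorem~\ref{zt} without writing out details, and the standard argument behind it is precisely your diagonalization $q(y)=\|Ty\|^2$ with $T=A^{1/2}$, the substitution $u=Tx$ with the bijection $S\mapsto T(S)$, and the bookkeeping $|\det T|^{\frac{n+2p}{n}-1}=(\det q)^{\frac{p}{n}}$. The convention $\delta_{p,n}=\delta_{2p,n}$ (Zador with $\alpha=p$) that you adopt is also the paper's.
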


By applying Zador's Theorem with $\alpha=p$, we obtain the constant that appears in Theorem \ref{teoprin}.

\paragraph{Convex functions.}
We denote by 
$$\Conv(\mathbb{R}^n)=\{u: \mathbb{R}^n\rightarrow (-\infty, +\infty]\mid \ u \  \mbox{is l.s.c and convex}, \ u\not\equiv +\infty\} $$
the space of  proper, lower semicontinuous, convex functions defined on $\mathbb{R}^n$.

We  say that a sequence $u_k\in \Conv(\mathbb{R}^n)$ \textit{epi-converges} to $u$ if, for every sequence $x_k$ converging to $x$,  
$$u(x)\leq \liminf_{k\rightarrow +\infty} u_k(x_k),$$
and there exists a sequence $x_k$ converging to $x$ such that 
$$u(x)= \lim_{k\rightarrow +\infty}u_k(x_k).$$

The following theorem provides a concrete characterization of epi-convergence, showing its equivalence with a more familiar notion of convergence.  

\begin{teo}[\cite{rockafellar2009variational}, Theorem 7.17]\label{equivalence}
Let $u_k\in \Conv(\mathbb{R}^n)$ be a sequence of  functions which epi-converges to $u$. If $\dom u$ has non-empty interior, the following statements are equivalents
\begin{itemize}
    \item[(1)] $u_k$ epi-converges to $u$;
    %\item[(2)] there is a dense subset $D$ of $\, \mathbb{R}^n$ such that $u_k(x)$ pointwise converges to $u(x)$, for all $x\in D$.
    \item[(2)] $u_k$ converges uniformly to $u$ on every compact set $C$ that does not
     contain a boundary point of $\dom u$.
\end{itemize}
\end{teo}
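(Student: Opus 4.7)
My plan is to verify both directions separately, using the structural observation that any compact set $C$ with $C \cap \bd(\dom u) = \emptyset$ decomposes as a disjoint union $C = C_{\mathrm{in}} \sqcup C_{\mathrm{out}}$ with $C_{\mathrm{in}} \subset \interior(\dom u)$ and $C_{\mathrm{out}} \subset \mathbb{R}^n \setminus \overline{\dom u}$; since the sets $\interior(\dom u)$ and $\mathbb{R}^n \setminus \overline{\dom u}$ are open and together cover $C$, each piece is clopen in $C$ and therefore compact.

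For the direction $(1)\Rightarrow(2)$, I would treat the two pieces separately. On $C_{\mathrm{out}}$, where $u \equiv +\infty$, I would argue by contradiction: if $\min_{x\in C_{\mathrm{out}}} u_k(x)$ did not tend to $+\infty$, one could extract $x_{k_j} \in C_{\mathrm{out}}$ converging to some $x_\infty \in C_{\mathrm{out}}$ with $u_{k_j}(x_{k_j})$ bounded, contradicting the liminf condition of epi-convergence at $x_\infty$. On $C_{\mathrm{in}}$, I would first use the two epi-convergence conditions applied to the constant sequence $x_k \equiv x$, combined with the continuity of $u$ on $\interior(\dom u)$, to deduce pointwise convergence $u_k(x) \to u(x)$ for every $x \in \interior(\dom u)$. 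Then I would invoke the classical convex-analytic fact that pointwise convergence of a sequence of finite convex functions on an open convex set upgrades automatically to locally uniform convergence, yielding uniform convergence on the compact set $C_{\mathrm{in}}$.

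For the direction $(2)\Rightarrow(1)$, I would verify the liminf and limsup conditions pointwise. If $x \in \interior(\dom u)$, a small closed ball $\bar B \subset \interior(\dom u)$ around $x$ avoids $\bd(\dom u)$; for $k$ large, $x_k \in \bar B$, and uniform convergence together with continuity of $u$ at $x$ handles both conditions. If $x \notin \overline{\dom u}$, a small closed ball around $x$ avoids $\overline{\dom u}$ and uniform convergence forces $u_k \to +\infty$ there, so both conditions follow immediately. The case $x \in \bd(\dom u)$ is the main difficulty, since the hypothesis provides no direct control of $u_k$ at boundary points.

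To treat the boundary case I would use a segment argument. Fix $y \in \interior(\dom u)$ (which exists by the standing hypothesis) and, for $t \in (0,1)$, set $z_t = (1-t)x + ty \in \interior(\dom u)$ and $z_{k,t} = (1-t)x_k + ty$ for a given sequence $x_k \to x$. Convexity of $u_k$ gives $u_k(z_{k,t}) \le (1-t)u_k(x_k) + tu_k(y)$. Since $z_{k,t}$ and $y$ lie eventually in compact neighborhoods of $z_t$ and $y$ inside $\interior(\dom u)$, the hypothesis yields $u_k(z_{k,t}) \to u(z_t)$ and $u_k(y) \to u(y)$. Rearranging produces $\liminf_k u_k(x_k) \ge (u(z_t) - tu(y))/(1-t)$, and letting $t \to 0^+$ together with the standard identity $\lim_{t\to 0^+} u((1-t)x + ty) = u(x)$, valid for any proper lsc convex $u$ with $x \in \overline{\dom u}$ and $y \in \interior(\dom u)$, gives the liminf inequality. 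The limsup condition at $x \in \bd(\dom u)$ follows from a diagonal selection of $t_k \to 0^+$ so that $z_{t_k} \to x$ and $u_k(z_{t_k}) \to u(x)$, using pointwise convergence $u_k(z_t) \to u(z_t)$ for each fixed $t$ together with $u(z_t) \to u(x)$.
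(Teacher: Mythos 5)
The paper does not actually prove this statement---it is quoted from Rockafellar--Wets---so your argument stands on its own. Your overall architecture (the clopen decomposition $C = C_{\mathrm{in}} \sqcup C_{\mathrm{out}}$, the compactness/contradiction argument on $C_{\mathrm{out}}$, the segment construction $z_t=(1-t)x+ty$ combined with the closure identity $\lim_{t\to 0^+}u(z_t)=u(x)$ for boundary points, and the diagonal selection of $t_k$ for the recovery sequence) is the standard route to this result, and the direction $(2)\Rightarrow(1)$ is complete as sketched.

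There is, however, a genuine gap in $(1)\Rightarrow(2)$ at the step where you ``deduce pointwise convergence $u_k(x)\to u(x)$ on $\interior(\dom u)$ from the two epi-convergence conditions applied to the constant sequence.'' The liminf condition applied to $x_k\equiv x$ does give $\liminf_k u_k(x)\ge u(x)$, but the recovery condition cannot be ``applied to the constant sequence'': it only asserts the existence of \emph{some} sequence $y_k\to x$ with $u_k(y_k)\to u(x)$, and nothing available at this stage lets you transfer that bound from $y_k$ to $x$ itself (continuity of $u$ is of no help here; what you would need is equicontinuity of the $u_k$, which you do not yet have). The missing idea is the standard convexity argument: choose a simplex with vertices $v_0,\dots,v_n\in\interior(\dom u)$ containing $x$ in its interior, take recovery sequences $v_i^k\to v_i$ with $u_k(v_i^k)\to u(v_i)$, observe that for large $k$ one has $x=\sum_i\lambda_i^k v_i^k$ with barycentric coordinates $\lambda_i^k\to\lambda_i$, and use convexity of $u_k$ to obtain $\limsup_k u_k(x)\le\sum_i\lambda_i u(v_i)$; shrinking the simplex to $x$ and using continuity of $u$ at interior points then gives $\limsup_k u_k(x)\le u(x)$. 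Once pointwise convergence is in hand, your appeal to the classical ``pointwise implies locally uniform'' theorem for convex functions is legitimate, though you should add a word on why the $u_k$ are eventually finite on a neighborhood of any compact $K\subset\interior(\dom u)$ (finiteness at finitely many points whose convex hull contains $K$ in its interior suffices, again by convexity), since that theorem is stated for finite convex functions on an open convex set.
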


We are interested in exploring a dual perspective related to Theorem~\ref{teoprin}.  
To this end, it is necessary to introduce the Monge--Ampère measure and the dual space of $\LC(\mathbb{R}^n)$ via the Legendre transform.

Let $u\in\Conv(\mathbb{R}^n)$. The \emph{subdifferential} of $u$ at $x \in \dom u$ is defined as 
$$\partial u(x)=\{y\in\mathbb{R}^n: u(z)\geq u(x)+ y \cdot (z-x) \ \mbox{for all } \ z\in \mathbb{R}^n\},$$
and we set $\partial u(x)=\emptyset$ for $x\not\in\dom u$,  where 
$x \cdot y = \sum_{i=1}^n x_i y_i$ denotes the standard inner product in $\mathbb{R}^n$. For a subset $B \subset \mathbb{R}^n$, the image of $B$ under the subdifferential of $u$ is
$$\partial u(B)= \bigcup_{x\in B}\partial u(x).$$
The Monge--Ampère measure of $u$ is then defined, for every Borel set $B \subseteq \dom u$, by
\begin{align}\label{mong}
\MA(u; B) := V_n(\partial u(B)).   
\end{align}
Moreover, by \cite[Theorem~A.31]{figalli2017monge}, if $B\subset\dom u$ is an open bounded set and  $u \in C^2(B)$, then
\begin{align}\label{maa}
    \MA(u; B) = \int_{B} \det \D u(x) \, \mathrm{d}x.
\end{align}

For a convex body $K \in \mathcal{K}^{n+1}$, the \emph{support function} is defined by
\[
h_K(x) = \max_{y \in K}  x\cdot  y, \qquad x \in \mathbb{R}^{n+1}.
\]
If $\mu$ is a measure on $\mathbb{R}^n$, we write $\supp(\mu)$ for the support of $\mu$. The following class of convex functions will be considered
\begin{multline}\label{def_dual}
 \Conv_{\MA}\finite 
\\[1mm]
    = \Bigl\{ h_K(\cdot, -1) \mid \ K \in \mathcal{K}^{n+1}, \ 
    \supp\bigl(\MA(h_K(\cdot, -1); \cdot)\bigr) \text{ is compact in } \mathbb{R}^n \Bigr\}.
\end{multline}
Note that 
\begin{align}\label{lip_monge}
\Conv_{\MA}\finite \subset \lip\finite,    
\end{align}
the space of finite-valued Lipschitz convex functions on $\mathbb{R}^n$ (see, e.g., \cite[equation (8)]{baeta_ludwig_semicontinuity}).

We equip $\Conv_{\MA}\finite$ with the following notion of convergence: we say that  $v_k \in \Conv_{\MA}\finite$ is  \emph{$\tau^*$}-convergent  to $v \in \Conv_{\MA}\finite$ if the following conditions hold
\begin{itemize}
    \item[(i)] $v_k$ epi-converges to $v$;
    \item[(ii)] there exists a compact set $C \subset \mathbb{R}^n$ containing the supports of $\MA(v; \cdot)$ and $\MA(v_k; \cdot)$ for all $k \in \mathbb{N}$.
\end{itemize}

By \cite[Lemma 3]{baeta_ludwig_semicontinuity}, we have the following duality via Legendre transform
\begin{align*}
    (\Conv_{\MA}\finite)^*= \LC\RR,
\end{align*}
where the  \textit{Legendre transform}  of  a convex function $u: \mathbb{R}^n\to (-\infty, +\infty]$ is given by 
\begin{align*}
    u^*(x)=\sup_{y\in\mathbb{R}^n} (\langle x,y \rangle - u(y)), \hspace{0.1cm} x \in\mathbb{R}^n.
\end{align*}
In particular, this duality justifies the terminology of \(\tau^*\)-convergence. 
The following lemma makes this precise.

\begin{lema}[\cite{baeta_ludwig_semicontinuity}, Lemma~3]\label{duality}
A sequence $u_k$ in $\LC\RR$ is $\tau$-convergent to $u \in \LC\RR$ if and only if $u_k^*$ and $u^*$ belong to 
$\Conv_{\MA}\finite$ and $u_k^*$ is $\tau^*$-convergent to  $u^*$.
\end{lema}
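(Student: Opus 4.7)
The plan is to decompose the biconditional into two structural claims and reduce the convergence part to the classical continuity theorem for Legendre conjugation. Specifically, I establish (A) a membership correspondence---$u \in \LC\RR$ if and only if $u^* \in \Conv_{\MA}\finite$, with the Lipschitz constant of $u$ quantitatively controlling the support of $\MA(u^*;\cdot)$ and vice versa---and then (B) transfer epi-convergence via the Wijsman-type theorem available in \cite{rockafellar2009variational}. Condition (i) in each convergence matches via (B), while condition (ii) matches via the quantitative form of (A) applied uniformly over the sequence.

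For (A), suppose $u \in \LC\RR$ has compact domain $C = \dom u$ and Lipschitz constant $L_u$ on $\interior C$. The Lipschitz property forces $u$ to be bounded on $\interior C$, and passing to the lsc envelope (which does not change $u^*$) extends the bound to all of $C$, so that
\begin{align*}
K_u := \overline{\{(y,t) \in \mathbb{R}^{n+1} : y \in C,\ u(y) \le t \le M\}}
\end{align*}
is a convex body for any $M > \sup_C u$, and a direct computation gives $h_{K_u}(x,-1) = \sup_{y \in C}(x\cdot y - u(y)) = u^*(x)$. For the compact-support condition, let $B \subset \mathbb{R}^n$ be Borel and disjoint from the closed ball $\overline{B(0,L_u)}$. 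By Fenchel duality, $\partial u^*(B) = \{y \in C : \partial u(y) \cap B \neq \emptyset\}$; but Lipschitzness on $\interior C$ gives $\partial u(y) \subset \overline{B(0,L_u)}$ for every $y \in \interior C$, so $\partial u^*(B) \subset \bd C$, a set of zero $n$-dimensional Lebesgue measure. Hence $\MA(u^*;B) = V_n(\partial u^*(B)) = 0$, yielding $\supp \MA(u^*;\cdot) \subset \overline{B(0,L_u)}$. The reverse direction of (A) starts from $v = h_K(\cdot,-1) \in \Conv_{\MA}\finite$ with $\supp \MA(v;\cdot) \subset K_0$ compact, and uses the support-function representation together with compactness of the Monge--Ampère support to show that $\dom v^*$ is compact and that $v^*$ is Lipschitz on $\interior \dom v^*$ with constant controlled by the diameter of $K_0$.

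For (B), the classical continuity theorem for Legendre conjugation on proper lsc convex functions (see \cite[Chapter~11]{rockafellar2009variational}) transfers the epi-convergence in condition (i) in both directions, after replacing each function by its lsc envelope (an operation that preserves the Legendre transform). For condition (ii), the implication $L_{u_k} \le M \Rightarrow \supp \MA(u_k^*;\cdot) \subset \overline{B(0,M)}$ for every $k$ follows from the quantitative form of (A) applied uniformly in $k$, giving exactly the common compact set required by $\tau^*$-convergence; conversely, a common compact set containing $\supp \MA(u_k^*;\cdot)$ for all $k$ yields a uniform Lipschitz bound on the $u_k = (u_k^*)^*$ through the reverse direction of (A). Membership $u_k^*,\, u^* \in \Conv_{\MA}\finite$ in the forward direction, and $u_k,\, u \in \LC\RR$ in the reverse direction, is also supplied by (A) applied individually to each function in the sequence.

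The main obstacle is the converse side of the quantitative correspondence in (A): deducing from compactness of $\supp \MA(v;\cdot)$ that $v^*$ has compact domain and is Lipschitz on its interior with a constant controlled by the Monge--Ampère support. Pointwise information about individual subdifferentials $\partial v(x)$ is insufficient, because the Monge--Ampère measure only sees sets of positive Lebesgue measure in the image of the subgradient map. One must instead exploit that the representation $v = h_K(\cdot,-1)$ combined with $\MA(v;\cdot)$ vanishing outside a compact set $K_0$ forces $v$ to be asymptotically affine with slopes lying in $K_0$; this in turn confines $\dom v^*$ to (the closed convex hull of) $K_0$ and bounds the interior Lipschitz constant of $v^*$ by the diameter of $K_0$, closing the correspondence.
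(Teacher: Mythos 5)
The paper does not prove this lemma; it imports it verbatim from \cite{baeta_ludwig_semicontinuity}, so there is no internal proof to compare against, and your proposal must stand on its own. Your architecture --- a membership correspondence (A) between $\LC\RR$ and $\Conv_{\MA}\finite$ under conjugation, plus transfer of epi-convergence by Wijsman's theorem (B) --- is the right one, and your forward direction of (A) is essentially correct: the identity $u^*=h_{K_u}(\cdot,-1)$, the reduction via the lsc envelope, and the inclusion $\partial u^*(B)\subseteq\{y:\partial u(y)\cap B\neq\emptyset\}\subseteq\bd(\dom u)$ for $B$ disjoint from $\overline{B(0,L_u)}$ all check out and give $\supp\MA(u^*;\cdot)\subseteq\overline{B(0,L_u)}$.

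The gap is in the converse of (A), precisely the step you flag as the main obstacle, and the mechanism you propose there is wrong as stated. First, the claim that compactness of $K_0=\supp\MA(v;\cdot)$ ``confines $\dom v^*$ to the closed convex hull of $K_0$'' is false: take $n=1$, $K=[0,1]^2$, so $v(x)=h_K(x,-1)=\max(0,x)$; then $\supp\MA(v;\cdot)=\{0\}$ while $\dom v^*=[0,1]$. Compactness of $\dom v^*$ has nothing to do with $K_0$ --- it comes from the representation $v=h_K(\cdot,-1)$ alone, since $v=u_K^*$ where $u_K(y)=\min\{t:(y,t)\in K\}$ is lsc convex with domain the (compact) projection of $K$ onto $\mathbb{R}^n$, whence $v^*=u_K$ and $\dom v^*=\proj_{\mathbb{R}^n}K$. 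Second, the Lipschitz constant of $v^*$ on $\interior(\dom v^*)$ is not controlled by the \emph{diameter} of $K_0$ (a singleton $K_0=\{a\}$ with $\|a\|$ large already defeats this); the correct bound is $\max_{x\in K_0}\|x\|$. The argument that actually closes the converse is: $\MA(v;\mathbb{R}^n\setminus K_0)=0$ means $\partial v(\mathbb{R}^n\setminus K_0)$ is Lebesgue-null; since $y\in\partial v(x)\Leftrightarrow x\in\partial v^*(y)$, almost every $y\in\interior(\dom v^*)$ is a point of differentiability of $v^*$ with $\nabla v^*(y)\in K_0$, and because the subdifferential at any interior point is the closed convex hull of limits of nearby gradients, $\partial v^*(y)\subseteq\overline{\conv K_0}$ for \emph{all} $y\in\interior(\dom v^*)$. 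This yields the Lipschitz bound $\max_{x\in K_0}\|x\|$, which is what feeds the uniformity needed in condition (ii) (note that for the lemma itself, compactness of $\dom u_k$ and $\dom u$ is already hypothesized, so only this gradient confinement is genuinely required from the converse of (A)). With that replacement the proof goes through; as written, the justification of the crux step does not.
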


\section{An Equivalent Framework and Applications}\label{dc}
Consider the class
\[
\mathcal{F}(X) = \{ v: X \to \mathbb{R} \mid v \text{ is Lipschitz, convex, and of class } C^2_+ \text{ in } \interior(X) \},
\]
where $X \subset \mathbb{R}^n$ is compact. We equip $\mathcal{F}(X)$ with the topology of uniform convergence. Let $\omega: \mathbb{R}^{n+1} \to \mathbb{R}$ be a continuous positive  function. As in \eqref{minimum}, for $v \in \mathcal{F}(X)$ we define the weighted approximation
\begin{align*}
    \triangle_p(v, P_{(m)}^c, \omega) 
    := \min \Bigl\{ \int_X (v(x) - l_m(x))^p \, \omega(x, v(x)) \, \mathrm{d}x : l_m \in P_{(m)}^c \Bigr\},
\end{align*}
for sufficiently large $m$ and $p>0$.
\medskip

Theorem \ref{teoprin} is a consequence of the following theorem (see Section \ref{proof}).
\begin{teo}\label{main2}
Let $v \in \mathcal{F}(X)$ and let $\omega: \mathbb{R}^{n+1} \to \mathbb{R}$ be continuous and positive. Then there exists a constant $\delta_{p,n}$, depending only on $n$ and $p>0$, such that
    \begin{align*}
     \lim_{m\rightarrow +\infty} m^{\frac{2p}{n}}\triangle_p(v, P_{(m)}^c,\omega)=\frac{\delta_{p,n}}{2^p}\left(\int_{X} (\det \D v(x))^{\frac{p}{n+2p}}\omega(x,v(x))^\frac{n}{n+2p}\dif x\right)^{\frac{n+2p}{n}}.    
    \end{align*}
\end{teo}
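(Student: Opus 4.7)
The plan is to extend Gruber's method of polytopal approximation for convex bodies to the functional setting, with Lemma \ref{lema5.1} providing the local Zador-type estimate and a Hölder-style optimization distributing the affine pieces across the domain. The argument splits into an upper bound via explicit construction and a matching lower bound via localization.

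For the upper bound, I would partition $\interior(X)$ into small cells $X_1,\dots,X_N$ of diameter at most $\epsilon$, leaving a thin boundary strip of total volume $O(\epsilon)$. On each cell $X_i$, pick $x_i\in X_i$ and use the Taylor expansion
\[
v(x)=v(x_i)+\nabla v(x_i)\cdot(x-x_i)+\tfrac{1}{2}(x-x_i)^{\top}\D v(x_i)(x-x_i)+o(\|x-x_i\|^{2}),
\]
valid since $v\in C^{2}_{+}$ in $\interior(X)$. Allocate $m_i$ affine functions to $X_i$, each supporting $v$ at a point $s_{i,j}\in X_i$, so that $v(x)-\psi_{i,j}(x)\approx\tfrac12(x-s_{i,j})^{\top}\D v(x_i)(x-s_{i,j})$. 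Using uniform continuity of $\omega$ on the compact graph of $v$ to freeze $\omega$ at $(x_i,v(x_i))$, Lemma \ref{lema5.1} applied to the quadratic form $q_i(y)=y^{\top}\D v(x_i)y$ gives the local asymptotic
\[
\frac{\delta_{p,n}}{2^p}\,\omega(x_i,v(x_i))\,(\det\D v(x_i))^{p/n}V_n(X_i)^{(n+2p)/n}m_i^{-2p/n}.
\]
A Lagrange-multiplier computation shows that, subject to $\sum m_i\le m$, the optimum is $m_i\propto\omega(x_i,v(x_i))^{n/(n+2p)}(\det\D v(x_i))^{p/(n+2p)}V_n(X_i)$, and summing yields the $(n+2p)/n$-th power of a Riemann sum that converges as $\epsilon\to 0$ to the target integral.

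For the lower bound, I would take any $l_m\in P_{(m)}^{c}$ attaining (up to $o(1)$) the infimum, and use the same cell partition to associate to each $X_i$ the number $m_i$ of affine pieces of $l_m$ active on $X_i$, with $\sum m_i\le m+o(m)$. On each cell, the quadratic-form Taylor expansion together with Lemma \ref{lema5.1} gives the same local lower bound, and Hölder's inequality in the form
\[
\Bigl(\sum_i a_i^{n/(n+2p)}\Bigr)^{(n+2p)/n}\le m^{2p/n}\sum_i a_i\,m_i^{-2p/n},
\]
applied to the local factors $a_i$, produces the matching lower bound after sending $\epsilon\to 0$.

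The main obstacle will be the lower bound, specifically the combinatorial bookkeeping required to associate each of the $m$ affine pieces defining $l_m$ to a definite cell without double counting, while accommodating the global nature of the $\max$ operation. A secondary technicality is the boundary strip $X\setminus\bigcup_i X_i$: there $v$ is only Lipschitz, but the strip has volume $O(\epsilon)$ and the difference $0\le v-l_m$ is uniformly controlled by the Lipschitz constant of $v$ and the diameter of $X$, so its contribution vanishes as $\epsilon\to 0$. Finally, the $v$-dependence of the weight $\omega(x,v(x))$ is absorbed by the uniform continuity of $\omega$ on the compact graph of $v$, which allows $\omega$ to be frozen on each small cell with uniform $o(1)$ error.
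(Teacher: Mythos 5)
Your outline is, in substance, the paper's own argument (Gruber's method transplanted to the functional setting): freeze the Hessian form $q_x(y)=y\cdot \D v(x)\,y$ and the weight up to a factor $\lambda$ close to $1$ on small pieces, apply the Zador-type Lemma \ref{lema5.1} locally, allocate affine pieces proportionally to $(\det \D v)^{p/(n+2p)}\omega^{n/(n+2p)}$ for the upper bound (your Lagrange-multiplier allocation is exactly the paper's choice $d_i^m=\lfloor\tau_i m\rfloor$ in \eqref{eq5.18}), and recombine the local lower bounds with a power-mean inequality (the paper's \eqref{inequality}, equivalent to your H\"older form). The combinatorial obstacle you flag in the lower bound is real as you set it up: if the cells tile $\interior(X)$, a single affine piece of the optimal $l_m$ can be active on several adjacent cells and $\sum_i m_i\le m$ may fail. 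The paper resolves this by \emph{not} using a partition for the lower bound: Proposition \ref{prop10} selects pairwise disjoint compact Jordan measurable sets $J_i$ (hence at positive mutual distance) that capture all but a factor $1/\lambda$ of the target integral, and since $\int_X(v-l_m)^p\omega\,\dif x\to 0$ and $v$ is strictly convex, the diameters of the active regions $C_{i_k}^m$ tend to zero, so for large $m$ each active region meets at most one $J_i$ and $d_1^m+\cdots+d_l^m\le m$ holds exactly. Two further points you should make explicit there: each active piece of an optimal $l_m$ touches $v$ at some point (otherwise it could be raised), which is what turns $v-l_m$ into the quadratic expression $\tfrac12 q_{\,\cdot\,}(x-a_{i_k}^m)$ needed for Zador; and these touching points need not lie in the cell, so the infimum must be taken over sites in all of $\mathbb{R}^n$, which has the same asymptotics.

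The genuinely problematic step is your treatment of the boundary strip in the upper bound. Controlling $v-l_m$ on the strip only through the Lipschitz constant gives a strip contribution of order $\epsilon$ uniformly in $m$ (and even supporting $v$ at a net of $\sim\epsilon m$ points in the strip only improves this to order $\epsilon\, m^{-p/n}$); after multiplying by the normalization $m^{2p/n}$, this diverges as $m\to+\infty$ for each fixed $\epsilon$, since the limit in $m$ is taken before $\epsilon\to 0$. Only second-order (quadratic) decay of the error on the strip is compatible with the scale $m^{-2p/n}$, so Lipschitz regularity alone cannot absorb it. The paper sidesteps this entirely: in Proposition \ref{prop11} the compact sets $D_i$ partition all of $X$ and the comparability of $q_x$ with $q_{a_i}$ and of $\omega$ with $\omega_{a_i}$ holds on neighborhoods covering $X$; that is, the $C^2_+$ control is used up to the boundary of $X$ rather than only on an interior core, and no Lipschitz-only region is left over. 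With these two repairs (separated $J_i$ plus shrinking active regions for the lower bound, and a full covering by Hessian-comparable neighborhoods for the upper bound), your argument coincides with the proof in Section \ref{teo15}.
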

\noindent The constant $\delta_{p,n}$ is given by Zador's Theorem  with $\alpha=p$.
In order to establish Theorem \ref{main2}, we proceed along the lines of the method developed in \cite[ Theorem 11.4]{gruber2007convex}.

The Lipschitz condition in the definition of $\mathcal{F}(X)$ ensures that the integral
\begin{align*}
 \int_{X} (\det \D v(x))^{\frac{p}{n+2p}}\omega(x,v(x))^\frac{n}{n+2p}\dif x   
\end{align*}
is finite for every $p>0$. Indeed, since $\omega(\cdot, v(\cdot))$ is continuous and positive on the compact set $X$, it attains a maximum, say  $\omega_{\max} = \max_{x \in X} \omega(x,v(x))>0$. Consequently,
\begin{align*}
\int_{X} (\det \D v(x))^{\frac{p}{n+2p}}\omega(x,v(x))^\frac{n}{n+2p}\dif x& \leq  \omega_{\max}^\frac{n}{n+2p}  \int_{X} (\det \D v(x))^{\frac{p}{n+2p}}\dif x. 
\end{align*}
The finiteness of the right-hand side follows from the fact that if $L$ is the Lipschitz constant of $v$, then $\|y\|\leq L$ for every $y\in\partial v(x)$ and $x\in \interior (\dom u)$ (see, for example, \cite[Theorem 3.61]{beck2017}), together with Jensen's inequality,  the definition of the Monge--Ampère measure \eqref{mong}, and  \eqref{maa}, see also \cite[Theorem 1]{baeta_ludwig_semicontinuity}.

Consider the functional
\begin{align}\label{dual_a}
  Z_{\tilde{\zeta}}^*(v)=\int_{\mathbb{R}^n} \tilde{\zeta}(\det \D v(x))\dif x
\end{align}
for $v\in \Conv_{\MA}\finite$ and $\tilde{\zeta}\in \conc$. In \cite{baeta_ludwig_semicontinuity}, it is shown that $Z_{\tilde{\zeta}}^*$ is related to the $\zeta$-affine surface area functional  \eqref{affine} in the following way
\begin{align*}
   Z_{\tilde{\zeta}}^*(v)= Z_{\zeta}(v^*)
\end{align*}
where $\zeta(t)= \tilde{\zeta}(1/t)\, t$ for $t> 0$. In particular, $\zeta\in \conc$.  Consequently, the functional \eqref{dual_a} is also  naturally connected to the affine surface area of the graph of a convex function.

%We now introduce an extended class of piecewise affine functions for convex functions in $ \Conv_{\MA}\finite$. Let
%\[
%P_{(m)}^{\ext}(v) 
%\]
%denote the set of functions $l_m: \mathbb{R}^n \to \mathbb{R}$ satisfying the following conditions: 
%\begin{itemize}
%    \item[(1)] $l_m(x) = \max_{1 \le j \le m} w_j(x)$ for some affine functions $w_j$ and $x\in \supp(\MA(v;\cdot))$;
%    \item[(2)] $v(x) \ge l_m(x)$ for every $x \in \mathbb{R}^n$;
%    \item[(3)] for $x \in \mathbb{R}^n \setminus \supp(\MA(v;\cdot))$, 
%    \[
%    0 \le m^{\frac{2}{n}}(v(x) - l_m(x))\leq o(m), \quad o(m)\to 0, \text{ as } m\to +\infty.
%    \]
%\end{itemize}
%This definition ensures that $l_m$ is convex on all of $\mathbb{R}^n$, coincides with the classical definition on $X=\supp(\MA(v;\cdot))$, and approximates $v$ outside $X$ in a controlled way. 

%Initially, one might worry that the class $P_{(m)}^{\ext}(v)$ is empty due to the condition outside $X$.  However, since $v \in \Conv_{\MA}\finite$, its Monge--Ampère measure is supported on a compact set $X \subset \mathbb{R}^n$. 

%Therefore, it is always possible to construct functions $l_m \in P_{(m)}^{\ext}(v)$ that satisfy all the conditions. 

For a function $v \in \Conv_{\MA}\finite$, we denote by
\[
P_{(m)}^c\big(v\big|_{\supp(\MA(v;\cdot))}\big)
\] 
the class of piecewise affine functions $l_m$ that are circumscribed to $v$ on $\supp(\MA(v;\cdot))$, i.e.,
\[
l_m(x) \le v(x) \quad \text{for all } x \in \supp(\MA(v;\cdot)).
\] 
Moreover, each $l_m \in P_{(m)}^c\big(v\big|_{\supp(\MA(v;\cdot))}\big)$ can be represented as the maximum of at most $m$ affine functions.

\begin{teo}\label{ma}
Let $v \in \Conv_{\MA}\finite$ be of class $C^2_+$ on 
$\supp(\MA(v;\cdot))$, and let $\omega:\mathbb{R}^{n+1} \to \mathbb{R}$ be continuous and   positive. Then
\begin{align*}
\lim_{m\to+\infty} m^{\frac{2p}{n}} \min \Bigl\{ \int_{ \supp(\MA(v;\cdot))} (v(x)-l_m(x))^p \, \omega(x,v(x)) \, \mathrm{d}x : l_m \in P_{(m)}^c\big(v\big|_{\supp(\MA(v;\cdot))}\big)  \Bigr\} \\
= \frac{\delta_{p,n}}{2^p} \Biggl( \int_{\mathbb{R}^n} (\det \D v(x))^{\frac{p}{n+2p}} \, \omega(x,v(x))^{\frac{n}{n+2p}} \, \mathrm{d}x \Biggr)^{\frac{n+2p}{n}}.
\end{align*}
\end{teo}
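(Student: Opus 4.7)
The plan is to reduce Theorem \ref{ma} to Theorem \ref{main2} by restricting $v$ to the compact set $X := \supp(\MA(v;\cdot))$. First, I would verify that $v\big|_X \in \mathcal{F}(X)$: compactness of $X$ is built into the definition of $\Conv_{\MA}\finite$; by \eqref{lip_monge} the function $v$ is Lipschitz on $\mathbb{R}^n$ and hence on $X$; convexity is immediate; and the $C^2_+$ hypothesis on $\supp(\MA(v;\cdot))$ yields in particular $C^2_+$ regularity on $\interior(X)$. The class $P_{(m)}^c\big(v\big|_{\supp(\MA(v;\cdot))}\big)$ appearing in Theorem \ref{ma} coincides with the class $P_{(m)}^c$ of Theorem \ref{main2} applied to $v\big|_X$, since in both cases the elements are maxima of at most $m$ affine functions dominated by $v$ on $X$. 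Theorem \ref{main2} then yields
\begin{align*}
\lim_{m\to+\infty} m^{\frac{2p}{n}}\,\triangle_p\!\left(v\big|_X, P_{(m)}^c, \omega\right)
= \frac{\delta_{p,n}}{2^p}\left(\int_X (\det \D v(x))^{\frac{p}{n+2p}}\, \omega(x,v(x))^{\frac{n}{n+2p}}\,\mathrm{d}x\right)^{\frac{n+2p}{n}}.
\end{align*}

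Second, to match the right-hand side of Theorem \ref{ma}, whose integral is over all of $\mathbb{R}^n$, I would show that $\det \D v(x) = 0$ for almost every $x \in \mathbb{R}^n \setminus X$. By the definition of the support of a measure, $\MA(v; U) = 0$ for every open set $U \subset \mathbb{R}^n \setminus X$. Combining Aleksandrov's theorem with the Lebesgue decomposition of the Monge--Ampère measure (of which \eqref{maa} is the smooth case), the density of the absolutely continuous part of $\MA(v;\cdot)$ coincides with $\det \D v$ at almost every point of twice differentiability of $v$. Consequently $\det \D v = 0$ almost everywhere on $\mathbb{R}^n \setminus X$, and therefore the integral over $X$ appearing above agrees with the integral over $\mathbb{R}^n$ in the statement of Theorem \ref{ma}.

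The main obstacle I anticipate is precisely this identification of $\det \D v$ with the Lebesgue density of $\MA(v;\cdot)$ outside the $C^2_+$ region, since formula \eqref{maa} is stated only for $v \in C^2$ on the set in question. Modulo this classical fact from the theory of Monge--Ampère measures, Theorem \ref{ma} becomes a clean repackaging of Theorem \ref{main2}: once $v\big|_X \in \mathcal{F}(X)$ is confirmed and $\det \D v$ is shown to vanish almost everywhere off $X$, the conclusion is immediate. An alternative route, should the density argument require more care, is to approximate $v$ by functions whose Monge--Ampère measures are compactly supported in $\interior(X)$ and invoke the $\tau^*$-upper semicontinuity result of \cite[Theorem~1]{baeta_ludwig_semicontinuity} to pass to the limit in the integrals.
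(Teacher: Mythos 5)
Your proposal follows essentially the same route as the paper: apply Theorem \ref{main2} with $X=\supp(\MA(v;\cdot))$ and then observe that the integrand vanishes almost everywhere on $\mathbb{R}^n\setminus X$, so the integral over $X$ equals the integral over $\mathbb{R}^n$. Your justification of that last step via the Lebesgue decomposition of the Monge--Amp\`ere measure (rather than the $C^2$ formula \eqref{maa}, which the paper cites but which strictly applies only where $v$ is $C^2$) is in fact a more careful version of the paper's argument, and the proof is correct.
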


For the special case where $p=1$ and $\omega(x,v(x))= e^{- v(x)}$, we obtain the following.
\begin{cor}
Let $v \in \Conv_{\MA}\finite$ be of class $C^2_+$ on 
$\supp(\MA(v;\cdot))$.
Then
\begin{multline*}
\lim_{m \to +\infty} m^{\frac{2}{n}} \min \Bigl\{ \int_{\supp(\MA(v;\cdot))} (v(x)-l_m(x)) \, e^{-v(x)} \, \mathrm{d}x : l_m \in P_{(m)}^c\big(v\big|_{\supp(\MA(v;\cdot))}\big)  \Bigr\} \\
= \frac{\delta_{p,n}}{2^p} \Biggl( \int_{\mathbb{R}^n} (\det \D v(x))^{\frac{1}{n+2}} \, e^{-\frac{n}{n+2} v(x)} \, \mathrm{d}x \Biggr)^{\frac{n+2}{n}}.
\end{multline*}    
\end{cor}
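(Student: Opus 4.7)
The plan is to obtain this corollary as an immediate specialization of Theorem \ref{ma}, by selecting $p=1$ and the weight function $\omega : \mathbb{R}^{n+1} \to \mathbb{R}$ defined by $\omega(x,t) = e^{-t}$. The first task is to verify that this choice of weight satisfies the hypotheses required by Theorem \ref{ma}. Since $(x,t) \mapsto e^{-t}$ is clearly continuous and strictly positive on all of $\mathbb{R}^{n+1}$, the hypothesis on $\omega$ is met without difficulty. The remaining hypotheses on $v$, namely that $v \in \Conv_{\MA}(\mathbb{R};\mathbb{R})$ is of class $C^2_+$ on $\supp(\MA(v;\cdot))$, are assumed directly in the corollary and require nothing further.

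Next, I would substitute $p=1$ into the asymptotic formula provided by Theorem \ref{ma}. The exponents specialize as follows: $\frac{2p}{n}$ becomes $\frac{2}{n}$ on the left-hand side, $\frac{p}{n+2p}$ becomes $\frac{1}{n+2}$, the weight exponent $\frac{n}{n+2p}$ becomes $\frac{n}{n+2}$, and the outer exponent $\frac{n+2p}{n}$ becomes $\frac{n+2}{n}$. Consequently, $\omega(x,v(x))^{\frac{n}{n+2p}}$ collapses to $e^{-\frac{n}{n+2}v(x)}$, and the constant factor $\frac{\delta_{p,n}}{2^p}$ reduces to the value $\frac{\delta_{1,n}}{2}$ (which the corollary denotes by $\frac{\delta_{p,n}}{2^p}$ as a mild abuse of notation).

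The class over which the minimum is taken on the left-hand side coincides on the nose with the class $P_{(m)}^c\bigl(v\big|_{\supp(\MA(v;\cdot))}\bigr)$ of Theorem \ref{ma}, and the integrand $(v(x)-l_m(x)) \, e^{-v(x)}$ is precisely $(v(x)-l_m(x))^p \, \omega(x,v(x))$ under our choice of parameters. Putting these pieces together yields the claimed asymptotic identity, so the entire argument reduces to a substitution; no additional analytical step is required beyond the verification of continuity and positivity of $\omega$, which is trivial. The interest of the statement therefore lies less in its proof than in the recognition that the resulting integral on the right-hand side is exactly the weighted functional affine surface area of $v$ discussed in \cite{SchuttThaeleTurchiWerner2024}, providing a geometric interpretation of the limit.
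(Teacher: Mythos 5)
Your proposal is correct and matches the paper exactly: the corollary is obtained by specializing Theorem \ref{ma} to $p=1$ and $\omega(x,t)=e^{-t}$, whose continuity and positivity are immediate, and the paper offers no further argument beyond this substitution. Your remark that the constant should read $\frac{\delta_{1,n}}{2}$ is a fair observation about the paper's notation.
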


The functional
\begin{align*}
    Z(v)= \int_{\mathbb{R}^n} (\det \D v(x))^{\frac{1}{n+2}} \, e^{-\frac{n}{n+2} v(x)} \, \mathrm{d}x
\end{align*}
coincides with the \emph{weighted functional affine surface area} introduced in 
\cite{SchuttThaeleTurchiWerner2024}.  The class of convex functions considered in \cite{SchuttThaeleTurchiWerner2024} is not identical to the one considered in this work, although there is a nontrivial overlap between the two settings. Moreover, by \cite[Theorem~9]{baeta_ludwig_semicontinuity}, this functional is finite and $\tau^*$-upper semicontinuous.

\section{Proof of Theorem \ref{main2}}\label{teo15}
Let $\lambda > 1$ and let $v \in \mathcal{F}(X)$. Fix a point $a \in \interior(X)$. 
By Taylor’s formula, there exists $\xi = \xi(x)\in (0,1)$ such that  
\begin{align}\label{Taylor}
    v(x) = v(a) + \nabla v(a)\cdot (x-a) 
    + \frac{1}{2}(x-a)\cdot  \, \D v\!\left(a+\xi(x-a)\right)  (x-a),
\end{align}
where  $x\in X$. For each $x \in \interior(X)$, define the quadratic form
\begin{align}\label{defi}
    q_x(y) := y\cdot \D v(x)\, y, 
    \qquad y \in \mathbb{R}^n.
\end{align}
Note that $\det(\D q_x) = \det(\D v(x))$ for all 
$x \in \interior(X)$.

Since $v \in C^2_+(X)$, the coefficients of $q_x$ vary continuously with $x$. 
Hence, there exists an open convex neighborhood 
$U \subset X$ of $a$ such that, for all 
$x \in U$ and $y \in \mathbb{R}^n$, 
\begin{align}\label{eq6.6}
    \frac{1}{\lambda} \, q_a(y) \;\leq\; q_x(y) \;\leq\; \lambda \, q_a(y).
\end{align}
Moreover, for all $x \in U$ we have 
\begin{align}\label{eq6.7}
    \frac{1}{\lambda^n}\, \det(\D q_a) 
    \;\leq\; \det(\D q_x) 
    \;\leq\; \lambda^n \, \det(\D q_a).
\end{align}
For simplicity, we will denote $\det(\D q_x)$ simply by 
$\det(q_x)$ for every $x \in \operatorname{int}(X)$.

We begin by proving the following inequality
\begin{align}\label{eq6.4}
 m^{\frac{2p}{n}}\triangle_p(v, P_{(m)}^c,\omega)\geq \frac{\delta_{p,n}}{2^\frac{1}{p}\lambda^{\frac{3n+2np+2p}{n}}}\left(\int_{X}(\det \D v(x))^\frac{p}{n+2p}\omega(x,v(x))^\frac{n}{n+2p}\dif x\right)^\frac{n+2p}{n}   
\end{align}
for all sufficiently large $m$.

The open neighborhoods $U$ satisfying \eqref{eq6.6} and \eqref{eq6.7} form a cover of $X$.  Therefore, we can extract a finite subcover. By Lebesgue's covering lemma 
(see, e.g., \cite[p. 154]{kelley1955general}), each set in $X$ with sufficiently small diameter  is contained in one of the neighborhoods from the subcover.  This leads to the following result.

\begin{prop}\label{prop10}
Let $\lambda > 1$ and let $v \in \mathcal{F}(X)$. There exist small pieces $J_i$, $i=1,\dots, l$, in $X$, points $a_i$ and neighborhoods $U_i$ of $a_i$ in $X$ such that the sets 
$J_i \subseteq U_i \subseteq \interior(X)$ are compact, pairwise disjoint, and Jordan measurable, and
\begin{align}\label{eq6.6.}
 & \frac{1}{\lambda} q_{a_i}(y) \leq q_x(y) \leq \lambda q_{a_i}(y), 
 \qquad y\in\mathbb{R}^n, \ x\in U_i,
\\ & \label{eq6.7.}
 \frac{1}{\lambda^n} \det q_{a_i} \leq \det q_x \leq \lambda^n \det q_{a_i}, 
 \qquad x\in U_i,
\\ & \label{omega_lambda}
 \frac{1}{\lambda} \omega(x,v(x)) \leq \omega_{a_i} \leq \lambda \, \omega(x,v(x)), 
 \qquad x \in U_i,
\end{align}
and
\begin{align}\label{eq6.9}
 \sum_{i=1}^l \int_{J_i} (\det \D v(x))^\frac{p}{n+2p} \, \omega(x,v(x))^\frac{n}{n+2p} \, \dif x
 \geq \frac{1}{\lambda} \int_X (\det \D v(x))^\frac{p}{n+2p} \, \omega(x,v(x))^\frac{n}{n+2p} \, \dif x.
\end{align}
\end{prop}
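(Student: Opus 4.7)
The plan is to prove Proposition~\ref{prop10} by combining a pointwise continuity argument, a compactness reduction, and a fine dyadic subdivision of the domain into Jordan measurable pieces. Throughout, set $I := \int_X (\det \D v(x))^{\frac{p}{n+2p}} \, \omega(x,v(x))^{\frac{n}{n+2p}} \dif x$, which is finite by the discussion following Theorem~\ref{main2}, and write $\omega_a := \omega(a,v(a))$. The goal is to find a finite collection of small, pairwise disjoint, Jordan measurable compact pieces $J_i \subset \interior(X)$, each housed inside a neighborhood $U_i$ of some $a_i$ on which the Hessian quadratic form and the weight are $\lambda$-close to their values at $a_i$, and whose union captures at least a $1/\lambda$-fraction of the total integral $I$.

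First I would construct the pointwise neighborhoods. Since $v$ is $C^2_+$ on $\interior(X)$, the matrix-valued map $x \mapsto \D v(x)$ is continuous there, and hence so is the quadratic form $x \mapsto q_x$ defined in \eqref{defi}; likewise $x \mapsto \omega(x,v(x))$ is continuous and strictly positive on $X$. The argument already given around \eqref{eq6.6} and \eqref{eq6.7} yields, for each $a \in \interior(X)$, an open convex neighborhood $U_a \subset \interior(X)$ on which \eqref{eq6.6.} and \eqref{eq6.7.} hold with $a_i$ replaced by $a$; after possibly shrinking $U_a$ using the continuity of $\omega(\cdot,v(\cdot))$, condition \eqref{omega_lambda} also holds. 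Next, since $X$ is compact and convex with nonempty interior, $\partial X$ has Lebesgue measure zero, and by inner regularity I may choose a compact set $X' \subset \interior(X)$ with $\int_{X \setminus X'} (\det \D v(x))^{\frac{p}{n+2p}} \, \omega(x,v(x))^{\frac{n}{n+2p}} \dif x \leq \tfrac{\lambda-1}{2\lambda}\, I$. The family $\{U_a : a \in X'\}$ is an open cover of the compact set $X'$, so I extract a finite subcover $U_{a_1}, \dots, U_{a_N}$.

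Finally, I would refine this subcover into Jordan measurable pieces by a uniform dyadic grid. Partition $\mathbb{R}^n$ into closed cubes of side length $h$, and let $Q_1,\dots,Q_l$ be those contained in $X'$. By Lebesgue's covering lemma applied to $\{U_{a_j}\}$, for $h$ sufficiently small each $Q_i$ is contained in some $U_{a_{j(i)}}$; I set $a_i := a_{j(i)}$, $U_i := U_{a_{j(i)}}$, and take $J_i$ to be $Q_i$ slightly shrunk towards its center so that the resulting pieces are compact, Jordan measurable, and pairwise disjoint, while still contained in $U_i$. Since the integrand is continuous and hence bounded on $X'$, by further refining $h$ and controlling the shrinkage both the measure of $X' \setminus \cup_i Q_i$ (grid cubes that meet $X'$ but are not entirely contained in it) and the integral loss from the shrinkage can together be bounded by $\tfrac{\lambda-1}{2\lambda}\, I$. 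Combining with the boundary-layer loss yields $\sum_{i=1}^l \int_{J_i} (\det \D v)^{\frac{p}{n+2p}} \omega^{\frac{n}{n+2p}} \dif x \geq I - \tfrac{\lambda-1}{\lambda}\, I = \tfrac{1}{\lambda}\, I$, which is \eqref{eq6.9}. The main technical point I anticipate is the careful bookkeeping of the three error contributions (the boundary layer $X \setminus X'$, the grid cubes straddling $\partial X'$, and the cube shrinkage) to ensure their total is at most $(1-1/\lambda)I$; beyond this, no tool deeper than continuity and boundedness of the integrand on the compact set $X'$ should be required.
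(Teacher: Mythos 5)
Your argument is correct and takes essentially the same route as the paper: continuity of $x\mapsto q_x$ and of $\omega(\cdot,v(\cdot))$ produces the neighborhoods satisfying \eqref{eq6.6.}--\eqref{omega_lambda}, compactness yields a finite subcover, and Lebesgue's covering lemma lets you place small compact, pairwise disjoint, Jordan measurable pieces $J_i$ inside members of that subcover while retaining a $1/\lambda$ fraction of the integral, which is \eqref{eq6.9}. Your explicit dyadic-grid construction, the compact set $X'$ cutting off a boundary layer, and the three-way error bookkeeping simply make precise what the paper leaves as a sketch.
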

\noindent Note that conditions \eqref{omega_lambda} and \eqref{eq6.9} follow from the continuity of $\omega(\cdot, v(\cdot))$ and the fact that $v$ is of class $C^2_+$ in $X$. From now on, we assume that the sets $J_i$ and $U_i$, $i=1,\dots, l$, are those given by Proposition \ref{prop10}.

Let 
\[
\sigma_m = \bigl( (\gamma_1,\beta_1), \dots, (\gamma_m,\beta_m) \bigr) \in (\mathbb{R}^n \times \mathbb{R})^m,
\]
where each pair $(\gamma_j,\beta_j)\in \mathbb{R}^n \times \mathbb{R}$ corresponds to the coefficients of an affine function 
\[
x \mapsto \gamma_j \cdot x + \beta_j, \quad j=1,\dots,m.
\]
Define the function
\begin{align*}
   \Phi(\sigma_m) \;=\;  \int_X \left|v(x) - \max_{1\le j \le m} (\gamma_j \cdot x + \beta_j)\right|^p \, \omega(x,v(x)) \, \dif x, \qquad \sigma_m \in (\mathbb{R}^n \times \mathbb{R})^m.
\end{align*}
Then the best approximation  can be written as
\begin{multline*}
   \triangle_p(v, P_{(m)}^c, \omega) 
    = \min \Bigl\{ \int_{X} (v(x) - l_m(x))^p \, \omega(x,v(x)) \, \mathrm{d}x : l_m \in P_{(m)}^c \Bigr\}\\
    = \min \Bigl\{ \Phi((\gamma_1,\beta_1), \dots, (\gamma_m,\beta_m)):  v(x)\geq \max_{1\le j \le m} (\gamma_j \cdot x + \beta_j)(x) \text{ for all } x\in X\Bigr\}, 
\end{multline*}
and since $\Phi(\sigma_m)$ is continuous on $(\mathbb{R}^n \times \mathbb{R})^m$ and $X$ is compact,  this minimum is attained for some $\sigma_m=\bigl( (\gamma_1,\beta_1), \dots, (\gamma_m,\beta_m) \bigr)$.

Let $l_m \in P_{(m)}^c$ for $m = n+2, n+3, \dots$ be a sequence of best approximating convex piecewise affine functions of $v$.
Since $l_m$ converges uniformly to $v$, we have 
\begin{align}\label{eq6.1}
\int_X (v(x) - l_m(x))^p \, \omega(x,v(x)) \, \dif x \longrightarrow 0 \quad \text{as } m \to +\infty.
\end{align}
Let $w_1, \dots, w_m$ be the affine functions that define $l_m$, i.e.,
\begin{align}\label{piece}
    l_m(x) = \max_{1\leq j \leq m} \psi_j(x).
\end{align}
Note that there exist sets $C_1, \dots, C_m \subset X$ such that $X\subseteq \bigcup_{j=1}^m C_j$ and 
\[
l_m(x) = \psi_j(x) \quad \text{for all } x \in C_j, \ j=1,\dots,m,
\]
where an affine function $\psi_j$ may appear in several sets $C_j$,  since $l_m$ is defined by at most $m$ affine functions.

We denote by $C_{i_k}^m$, $k = 1, \dots, d_{i}^m$, the subsets of the collection $\{C_1, \dots, C_m\}$ such that 
\[
C_{i_k}^m \cap J_i \neq \emptyset, \quad i = 1, \dots, l.
\]
By \eqref{eq6.1}, and since $v$ is strictly convex, it follows that the diameter of the sets $C_{i_k}^m$ tends to zero as $m \to +\infty$.  This, combined with Proposition \ref{prop10}, implies that
\begin{align}\label{eq6.10}
\int_X (v(x) - l_m(x))^p \, \omega(x,v(x)) \, \dif x
\geq \sum_{i=1}^l \int_{J_i} (v(x) - l_m(x))^p \, \omega(x,v(x)) \, \dif x,
\end{align}
and
\begin{align}\label{eq5.12}
d_{1}^m + \cdots + d_{l}^m \leq m,
\end{align}
for all sufficiently large $m$, as well as
\begin{align}\label{eq5.11}
d_{i}^m \to +\infty \quad \text{as } m \to +\infty, \quad i = 1,\dots,l.
\end{align}

Note that for an optimal piecewise affine function, each $\psi_i$ satisfying \eqref{piece} must agree with $v$ at some point.  
Let $a_{i_k}^m \in C_{i_k}^m$ be the point where $\psi_{i_k}$ coincides with  $v$. Then, 
\begin{align}\label{ex}
l_m(x)=\psi_{i_k}(x)=   v(a_{i_k}^m) + \nabla v(a_{i_k}^m) \cdot (x - a_{i_k}^m)   
\end{align}
for every $x\in C_{i_k}^m$. That is,
\begin{multline}\label{eq5.13}
 \int_{J_i} (v(x)-l_m(x))^p \, \omega(x,v(x)) \, \dif x \\
= \sum_{k=1}^{d_{i}^m} \int_{C_{i_k}^m \cap J_i} 
\big(v(x) - v(a_{i_k}^m) - \nabla v(a_{i_k}^m) \cdot (x - a_{i_k}^m) \big)^p \, 
\omega(x,v(x)) \, \dif x.
\end{multline}

According to \eqref{Taylor}, we have
\begin{align}\label{eq5.101}
 v(x) - v(a_{i_k}^m) - \nabla v(a_{i_k}^m) \cdot (x - a_{i_k}^m)
 = \frac{1}{2} q_{a_{i_k}^m + \xi(x - a_{i_k}^m)}(x - a_{i_k}^m),
\end{align}
for $x \in C_{i_k}^m \cap J_i$, with a suitable $\xi \in [0,1]$ depending on $x$.  Since the diameter of $C_{i_k}^m$ tends to zero as $m \to +\infty$, 
Proposition \ref{prop10} ensures that, for sufficiently large $m$, 
all sets $C_{i_k}^m$ intersecting $J_i$ are contained in $U_i$.  
For such $m$, it then follows from \eqref{eq6.6.} that
\begin{align}\label{eq5.100}
\frac{1}{2} q_{a_{i_k}^m + \xi(x - a_{i_k}^m)}(x - a_{i_k}^m)
\geq \frac{1}{2\lambda} q_{a_i}(x - a_{i_k}^m),
\qquad x \in C_{i_k}^m \cap J_i.
\end{align}

Thus, by \eqref{eq6.10}, \eqref{eq5.13}, \eqref{eq5.101}, and \eqref{eq5.100}, we obtain
\begin{align*}
\triangle_p(v, P_{(m)}^c,\omega)
&= \int_X (v(x) - l_m(x))^p \, \omega(x,v(x)) \, \dif x \\
&\geq \frac{1}{(2\lambda)^p} \sum_{i=1}^{l} \sum_{k=1}^{d_{i}^m} \int_{C_{i_k}^m \cap J_i} 
(q_{a_i}(x - a_{i_k}^m))^p \, \omega(x,v(x)) \, \dif x \\
&\geq \frac{1}{(2\lambda)^p} \sum_{i=1}^{l} \int_{J_i} 
\min_{k=1,\dots,d_{i}^m} (q_{a_i}(x - a_{i_k}^m))^p \, \omega(x,v(x)) \, \dif x \\
&\geq \frac{1}{(2\lambda)^p} \sum_{i=1}^{l} 
\inf_{\substack{S \subset \mathbb{R}^n \\ \#S = d_{i}^m}} 
\int_{J_i} \min_{t \in S} (q_{a_i}(x - t))^p \, \omega(x,v(x)) \, \dif x,
\end{align*}
from which, using Lemma \ref{lema5.1} together with \eqref{omega_lambda} and \eqref{eq5.11}, it follows that
\begin{align}\label{eq5.102}
\triangle_p(v, P_{(m)}^c,\omega) 
\geq \frac{\delta_{p,n}}{2^p \lambda^{p+2}} \sum_{i=1}^{l} 
V_n(J_i)^{\frac{n+2p}{n}} \, (\det q_{a_i})^{\frac{p}{n}} \, \omega_{a_i} \, \frac{1}{(d_{i}^m)^{\frac{2p}{n}}}.
\end{align}

Next, we use a well-known consequence of Jensen's inequality
\begin{align}\label{inequality}
\frac{1}{m} \sum_{i=1}^m b_i^t \geq \left( \frac{1}{m} \sum_{i=1}^m b_i \right)^t, 
\end{align}
which holds for $t \geq 1$ and $b_i > 0$ for $i=1,\dots,m$.   Applying \eqref{eq5.102} and \eqref{inequality}, we obtain
\begin{align*} 
& \triangle_p(v, P_{(m)}^c,\omega) \\
&\geq \frac{\delta_{p,n}}{2^p \lambda^{p+2}} 
\left( \frac{1}{d_{1}^m+\cdots + d_{l}^m} 
\sum_{i=1}^{l} 
\big(V_n(J_i) (\det q_{a_i})^{\frac{p}{n+2p}} \omega_{a_i}^{\frac{n}{n+2p}} \frac{1}{d_{i}^m}\big)^{\frac{n+2p}{n}} d_{i}^m 
\right) \\
&\quad \times (d_{1}^m+\cdots + d_{l}^m) \\
&\geq \frac{\delta_{p,n}}{2^p \lambda^{p+2}} 
\left( \frac{1}{d_{1}^m+\cdots + d_{l}^m} 
\sum_{i=1}^{l} V_n(J_i) (\det q_{a_i})^{\frac{p}{n+2p}} \omega_{a_i}^{\frac{n}{n+2p}} \right)^{\frac{n+2p}{n}} 
\times (d_{1}^m+\cdots + d_{l}^m) \\
&= \frac{\delta_{p,n}}{2^p \lambda^{p+2}} 
\left( \sum_{i=1}^{l} V_n(J_i) (\det q_{a_i})^{\frac{p}{n+2p}} \omega_{a_i}^{\frac{n}{n+2p}} \right)^{\frac{n+2p}{n}} 
\frac{1}{(d_{1}^m+\cdots + d_{l}^m)^{\frac{2p}{n}}}.
\end{align*}
Using \eqref{eq6.7.}, \eqref{eq5.12}, \eqref{defi}, and \eqref{eq6.9}, we conclude \eqref{eq6.4}, since
\begin{align*}
\triangle_p(v, P_{(m)}^c,\omega) 
&= \int_X (v(x) - l_m(x))^p \, \omega(x,v(x)) \, \dif x \\
&\geq \frac{\delta_{p,n}}{2^p \lambda^{\,p+2+p+1}} 
\left( \sum_{i=1}^l \int_{J_i} (\det q_x)^{\frac{p}{n+2p}} \, \omega(x,v(x))^{\frac{n}{n+2p}} \, \dif x \right)^{\frac{n+2p}{n}} 
\frac{1}{m^{\frac{2p}{n}}} \\
&\geq \frac{\delta_{p,n}}{2^p \lambda^{\,2p+3+\frac{n+2p}{n}}} 
\left( \int_X (\det \D v(x))^{\frac{p}{n+2p}} \, \omega(x,v(x))^{\frac{n}{n+2p}} \, \dif x \right)^{\frac{n+2p}{n}} 
\frac{1}{m^{\frac{2p}{n}}} \\
&= \frac{\delta_{p,n}}{2^p \lambda^{\frac{3n+2np+2p}{n}}} 
\left( \int_X (\det \D v(x))^{\frac{p}{n+2p}} \, \omega(x,v(x))^{\frac{n}{n+2p}} \, \dif x \right)^{\frac{n+2p}{n}} 
\frac{1}{m^{\frac{2p}{n}}}
\end{align*}
for all sufficiently large $m$.
\goodbreak

Next, we will show that
\begin{align}\label{eq5.14}
m^{\frac{2p}{n}} \, \triangle_p(v, P_{(m)}^c,\omega) 
\leq \frac{\lambda^{\frac{4n+2np+4p}{n}} \, \delta_{p,n}}{2^p} 
\left( \int_X (\det \D v(x))^{\frac{p}{n+2p}} \, \omega(x,v(x))^{\frac{n}{n+2p}} \, \dif x \right)^{\frac{n+2p}{n}}   
\end{align}
for all sufficiently large $m$.

Using similar arguments to those at the beginning of the proof of \eqref{eq6.4}, we can partition $X$ into finitely many sets $D_i$, $i=1,\dots,l$, and select slightly larger sets $L_i$ such that $D_i \subseteq L_i \subset X$, along with points $a_i$ and neighborhoods $\tilde{U}_i$ in $X$ where the following result holds.
\begin{prop}\label{prop11}
Let $\lambda > 1$ and let $v \in \mathcal{F}(X)$. There exist small compact sets $D_i$ which partition $X$, with 
$D_i \subseteq L_i \subseteq \interior(\tilde{U}_i)$, $i=1,\dots, l$,
where $L_i$ is a Jordan measurable open set, $\tilde{U}_i$ is convex, and 
\begin{align}\label{eq5.16}
V_n(L_i) \leq \lambda \, V_n(D_i),
\end{align}    
\begin{align*}
& \frac{1}{\lambda} q_{a_i}(y) \leq q_x(y) \leq \lambda \, q_{a_i}(y), 
\quad y \in \mathbb{R}^n, \ x \in \tilde{U}_i,
\\ 
& \frac{1}{\lambda^n} \det q_{a_i} \leq \det q_x \leq \lambda^n \, \det q_{a_i}, 
\quad x \in \tilde{U}_i,
\end{align*}    
and 
\begin{align}\label{omega_lambda1}
\frac{1}{\lambda^n} \, \omega(x,v(x)) \leq \omega_{a_i} \leq \lambda^n \, \omega(x,v(x)), 
\quad x \in \tilde{U}_i.
\end{align}
\end{prop}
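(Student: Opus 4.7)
The plan is to adapt the construction in Proposition \ref{prop10}, replacing the loose family $(J_i, U_i)$ of disjoint interior pieces by a genuine partition $(D_i)$ of $X$ into small cells, each slightly enlarged into an open neighborhood $L_i$, still contained in a convex patch $\tilde U_i$ where the Hessian and weight are almost constant. The new ingredient, required for the upper bound \eqref{eq5.14}, is the volume-inflation control $V_n(L_i) \le \lambda V_n(D_i)$; this is what will later allow one to construct a competing piecewise affine function on the open set $L_i$ while charging its cost to the integral over the cell $D_i$.

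First, by continuity of $\D v$ on $\interior(X)$ and of $\omega(\cdot,v(\cdot))$ on $X$, for every $a \in \interior(X)$ one can fix an open convex neighborhood $\tilde U_a \subset \interior(X)$ of $a$ on which all three comparisons
\begin{equation*}
\tfrac{1}{\lambda} q_a \le q_x \le \lambda \, q_a, \qquad
\tfrac{1}{\lambda^n} \det q_a \le \det q_x \le \lambda^n \det q_a, \qquad
\tfrac{1}{\lambda^n}\, \omega(x,v(x)) \le \omega_a \le \lambda^n\, \omega(x,v(x))
\end{equation*}
hold pointwise; the exponent $\lambda^n$ on the weight line is only a convenient bookkeeping choice. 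The family $\{\tilde U_a : a \in \interior(X)\}$ covers every compact subset $K \subset \interior(X)$, so a finite subcover $\tilde U_1,\dots,\tilde U_N$ exists, and the Lebesgue covering lemma produces a uniform $\delta > 0$ such that any subset of $K$ of diameter less than $\delta$ lies entirely inside some $\tilde U_j$.

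Next, I partition $X$ into compact Jordan measurable cells $D_1,\dots,D_l$ using a fine axis-aligned grid of cubes of side length $s < \delta/\sqrt{n}$, intersected with $X$ and with boundary cells suitably merged with neighbouring ones. Each cell $D_i$ lies in some $\tilde U_{j(i)}$, which I relabel $\tilde U_i$ with base point $a_i$. For $L_i$, I take the concentric open axis-aligned cube of side $s\,\lambda^{1/n}$ intersected with $\tilde U_i$; a direct volume computation gives $V_n(L_i) \le \lambda V_n(D_i)$, and since $D_i$ is compactly contained in the open set $\tilde U_i$, the $\lambda^{1/n}$-enlargement keeps $L_i$ inside $\interior(\tilde U_i)$ as soon as $s$ was chosen small enough. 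Jordan measurability of $L_i$ is automatic for such a cube.

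The main obstacle is the treatment of cells that meet $\partial X$, since Proposition \ref{prop11} asks the $D_i$ to partition all of $X$ while the $C^2_+$ hypothesis on $v$—and hence the Hessian and determinant bounds—is only available on $\interior(X)$. The standard remedy is to first discard an arbitrarily thin outer shell $X \setminus K$ of Lebesgue measure at most $\varepsilon$, carry out the construction above on the compact set $K \subset \interior(X)$, and then absorb the residual boundary cells into adjacent interior ones. Since $\varepsilon$ can be made as small as desired and $\omega(\cdot, v(\cdot))$ is continuous hence bounded on $X$, the resulting distortion is swallowed by the $\lambda$-slack already present in \eqref{eq5.16} and the remaining inequalities; in particular the partition property of $\{D_i\}$ is preserved while each enlarged $L_i$ remains strictly inside $\interior(\tilde U_i) \subset \interior(X)$, where all the pointwise comparisons are valid.
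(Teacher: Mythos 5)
Your core construction (cover $\interior(X)$ by convex neighborhoods on which $q_x$, $\det q_x$ and $\omega(\cdot,v(\cdot))$ are $\lambda$-close to their values at a base point, extract a finite subcover, take a Lebesgue number, partition into fine grid cells $D_i$ and enlarge each slightly to an open Jordan measurable $L_i$ with $V_n(L_i)\le\lambda V_n(D_i)$) is exactly the route the paper intends; the paper itself only sketches this step by referring back to the covering argument of Proposition \ref{prop10}. The genuine problem is the part you yourself flag as the main obstacle, the cells meeting $\partial X$, and neither of the two remedies you offer actually delivers the statement. Absorbing boundary slivers into adjacent interior cells destroys the inclusion $D_i\subseteq L_i\subseteq\interior(\tilde U_i)$ with $\tilde U_i$ inside the region where the comparisons are available: after absorption $D_i$ contains points arbitrarily close to $\partial X$, and since $v\in\mathcal F(X)$ is only $C^2_+$ on $\interior(X)$, the Hessian need not extend continuously (it may blow up, e.g.\ $v(x)=x^{3/2}$ near $0$ in dimension one), so no choice of $a_i$ and no shrinking of $\tilde U_i$ guarantees $\tfrac1\lambda q_{a_i}\le q_x\le\lambda q_{a_i}$ on a relative neighborhood of a boundary point.

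Your second remedy, discarding a shell $X\setminus K$ of measure $\varepsilon$ and claiming the distortion is ``swallowed by the $\lambda$-slack,'' is not correct as stated. What you then prove is a partition of a proper compact subset $K\subset\interior(X)$, not of $X$, and this weakening is not harmless where Proposition \ref{prop11} is used: in the upper bound the contact points $a_{i_k}^m$ are chosen inside the sets $L_i$, so inequality \eqref{eq5.23} controls the error only over $\bigcup_i D_i$. Over the uncovered shell, the constructed $l_m$ is the maximum of tangent planes taken at points of the $L_i$'s, and for strictly convex $v$ its deviation from $v$ there does not tend to zero with $m$ at all; the shell therefore contributes an $m$-independent positive amount to $\int_X(v-l_m)^p\omega$, which is fatal after multiplication by $m^{2p/n}$. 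A factor $\lambda^{c}$ multiplying terms of order $m^{-2p/n}$ cannot absorb a constant. To repair this one needs an actual argument at the boundary — for instance a shell whose thickness and volume shrink with $m$, treated with only the Lipschitz bound on $v$ and a vanishing proportion of the $m$ affine pieces, or an a priori reduction to the case where the Hessian comparisons extend up to $\partial X$ — none of which is supplied (and which, to be fair, the paper also leaves implicit).
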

\noindent From now on, the sets $D_i$, $L_i$, and $\tilde{U}_i$, $i=1,\dots, l$, are those given by Proposition \ref{prop11}.
\goodbreak

We will construct piecewise affine functions $l_m$, each given by the maximum of at most $m$ affine functions, for sufficiently large $m$, such that the epigraph of $l_m$ contains the epigraph of $v$.  

Let
\begin{align}\label{eq5.18}
\tau_i = \frac{\int_{D_i} (\det \D v(x))^{\frac{p}{n+2p}} \, \omega(x,v(x))^{\frac{n}{n+2p}} \, \dif x}
{\int_X (\det \D v(x))^{\frac{p}{n+2p}} \, \omega(x,v(x))^{\frac{n}{n+2p}} \, \dif x}, 
\qquad 
d_{i}^m = \lfloor \tau_i m \rfloor,
\end{align}
where \(\lfloor \tau_i m \rfloor\) denotes the  greatest integer less than or equal to $\tau_i m$.

Then
\begin{align}
\label{eq5.19} & d_{i}^m \to +\infty, & &\text{as } m \to +\infty, \\
\label{eq5.20} & d_{i}^m \geq \frac{1}{\lambda} \, \tau_i m, & &\text{for all sufficiently large } m, \\
\label{eq5.21} & d_{1}^m + \cdots + d_{l}^m \leq m. 
\end{align}

Let $a_{i_k}^m \in L_i$, $k=1,\dots,d_{i}^m$, be chosen to minimize
\begin{multline*}
\int_{L_i} \min_{k=1,\dots,d_{i}^m} \big( q_{a_i}(x - a_{i_k}^m) \big)^p \, \omega(x,v(x)) \, \dif x \\
= \inf_{\substack{S \subseteq L_i \\ \#S = d_{i}^m}} 
\int_{L_i} \min_{t \in S} \big( q_{a_i}(x - t) \big)^p \, \omega(x,v(x))^{\frac{n}{n+2p}} \, \dif x.
\end{multline*}

Since $d_i^m \to +\infty$ as $m \to +\infty$ (cf.~\eqref{eq5.19}) and $L_i$ is compact,  the maximal distance from any point in $L_i$ to the nearest $a_{i_k}^m$ tends to zero.  In other words, for sufficiently large $m$, the points $a_{i_k}^m$ become densely distributed in each $L_i$, and therefore throughout $X $. Consider the affine functions tangent to $v$ at the points $a_{i_k}^m$.  
The function given by the maximum of these affine functions is a piecewise affine function, denoted by $l_m$,  and, by \eqref{eq5.21}, it is given by the maximum of at most $m$ affine functions. In addition, we have
\[
l_m \to v \quad \text{uniformly in } X.
\]
For all sufficiently large $m$, and as in \eqref{ex}, it also holds that
\begin{multline}\label{eq5.23}
\int_X (v(x) - l_m(x))^p \, \omega(x,v(x)) \, \dif x \\
\leq \sum_{i=1}^l \int_{L_i} 
\min_{k=1,\dots,d_{i}^m} \big( v(x) - v(a_{i_k}^m) - \nabla v(a_{i_k}^m) \cdot (x - a_{i_k}^m) \big)^p \, 
\omega(x,v(x)) \, \dif x.
\end{multline}
As shown in \eqref{eq5.101} and by Proposition \ref{prop11}, we have
\begin{align*}
v(x) - v(a_{i_k}^m) - \nabla v(a_{i_k}^m) \cdot (x - a_{i_k}^m) 
&= \frac{1}{2} q_{a_{i_k}^m + \xi(x - a_{i_k}^m)}(x - a_{i_k}^m) 
\\ &\leq \frac{\lambda}{2} q_{a_i}(x - a_{i_k}^m),
\end{align*}
with suitable $\xi \in (0,1)$ and $x \in L_i$.
\medskip

Combining this with \eqref{eq5.23}, \eqref{eq5.19}, Lemma \ref{lema5.1},  \eqref{omega_lambda1}, \eqref{eq5.16}, and \eqref{eq5.20}, we obtain
\begin{align*}
& \triangle_p(v, P_{(m)}^c,\omega) \\
& 
\leq \sum_{i=1}^l \int_{L_i} \min_{k=1,\dots,d_{i}^m} 
\big( v(x) - v(a_{i_k}^m) - \nabla v(a_{i_k}^m) \cdot (x - a_{i_k}^m) \big)^p \, \omega(x,v(x)) \, \dif x \\
&\leq \left( \frac{\lambda}{2} \right)^p \sum_{i=1}^l \int_{L_i} 
\min_{k=1,\dots,d_{i}^m} \big( q_{a_i}(x - a_{i_k}^m) \big)^p \, \omega(x,v(x)) \, \dif x \\
&\leq \frac{\lambda^{p+2} \, \delta_{p,n}}{2^p} \sum_{i=1}^l V_n(L_i)^{\frac{n+2p}{n}} (\det q_{a_i})^{\frac{p}{n}} \, \omega_{a_i} \frac{1}{(d_{i}^m)^{\frac{2p}{n}}} \\
&\leq \frac{\lambda^{p+2+\frac{n+2p}{n}+\frac{2p}{n}} \, \delta_{p,n}}{2^p} \sum_{i=1}^l 
V_n(D_i)^{\frac{n+2p}{n}} (\det q_{a_i})^{\frac{p}{n}} \frac{1}{\tau_i^{\frac{2p}{n}} m^{\frac{2p}{n}}}.
\end{align*}

Finally, using \eqref{eq5.14}, \eqref{inequality}, Proposition \ref{prop11}, \eqref{defi}, and \eqref{eq5.18}, we get
\begin{align*}
\triangle_\omega^p(v, P_{(m)}^c) 
&\leq \frac{\lambda^{\frac{3n+np+4p}{n}} \, \delta_{p,n}}{2^p} 
\sum_{i=1}^l \left( V_n(D_i) (\det q_{a_i})^{\frac{p}{n+2p}} \omega_{a_i}^{\frac{n}{n+2p}} \right)^{\frac{n+2p}{n}} \frac{1}{\tau_i^{\frac{2p}{n}} m^{\frac{2p}{n}}} \\
&\leq \frac{\lambda^{\frac{3n+np+4p+pn+n}{n}} \, \delta_{p,n}}{2^p} 
\sum_{i=1}^l \left( \int_{D_i} (\det q_x)^{\frac{p}{n+2p}} \omega(x,v(x))^{\frac{n}{n+2p}} \, \dif x \right)^{\frac{n+2p}{n}} \frac{1}{m^{\frac{2p}{n}}} \\
&\leq \frac{\lambda^{\frac{4n+2np+4p}{n}} \, \delta_{p,n}}{2^p} 
\left( \int_X (\det \D v(x))^{\frac{p}{n+2p}} \omega(x,v(x))^{\frac{n}{n+2p}} \, \dif x \right)^{\frac{n+2p}{n}} \frac{1}{m^{\frac{2p}{n}}}
\end{align*}
for all sufficiently large $m$.
\medskip

Therefore, \eqref{eq6.4} and \eqref{eq5.14} hold for any $\lambda > 1$, which concludes the proof of Theorem~\ref{main2}.

\section{Proof of Theorem \ref{teoprin} and Theorem \ref{ma}}\label{proof}
Recall that in Theorem \ref{teoprin} we  assume $u\in \LC(\mathbb{R}^n)$, i.e., $u:\mathbb{R}^n\to (-\infty, +\infty]$ is a Lipschitz convex function with compact domain (see \eqref{def_set}). In this space, a sequence $l_m$ is \emph{$\tau$-convergent} to $u$ if $l_m$ epi-converges to $u$ and the Lipschitz constants of the sequence are uniformly bounded.

\begin{proof}[Proof of Theorem \ref{teoprin}]
Let $u\in \LC\RR$. Then $\dom u$ is compact. If $\interior(\dom u) = \emptyset$, both integrals vanish and there is nothing to prove.   Hence, we may assume that $\dom u$ is full-dimensional. By Theorem \ref{equivalence}, if $l_m\in P_{(m)}^c$ converges uniformly to $u\in \LC\RR$ in $\interior (\dom u)$, then $l_m+\I_{\dom u}$ epi-converges to $u$. Moreover, since  the Lipschitz constants of $l_m$ are uniformly bounded,  we also have   that $l_m$ is $\tau$-convergent to $u$. Taking $X=\dom u$ in Theorem \ref{main2}, Theorem \ref{teoprin} immediately follows.     
\end{proof}

In Theorem \ref{ma}, the space of functions under consideration is $\Conv_{\MA}\finite$ as given by \eqref{def_dual}. By \eqref{lip_monge}, if $v\in \Conv_{\MA}\finite$, then $v$ is Lipschitz.
\medskip

\begin{proof}[Proof of Theorem \ref{ma}]
By Theorem \ref{main2}, we have 
\begin{multline}\label{case1}
\lim_{m\rightarrow +\infty} m^{\frac{2p}{n}}\min\left\{\int_{X} (v(x)-l_m(x))^{p}\omega(x,v(x))\dif x: l_m\in P_{(m)}^c\big(v|_{X}\big) \right\}\\
=  \frac{\delta_{p,n}}{2^p}\left(\int_{X} (\det \D v(x))^{\frac{p}{n+2p}}\omega(x,v(x))^\frac{n}{n+2p}\dif x\right)^{\frac{n+2p}{n}},  
\end{multline}    
where $X=\supp(\MA(v;\cdot ))$.  Moreover,  since $X$ is the support of the Monge--Ampère measure of $v$, by \eqref{maa} we have
\begin{align}\label{case2}
 \int_{\mathbb{R}^n \setminus X} (\det \D v(x))^{\frac{p}{n+2p}} \omega(x,v(x))^\frac{n}{n+2p} \, \mathrm{d}x = 0.   
\end{align}
Combining \eqref{case1} and \eqref{case2} completes the proof.
\end{proof}

\section*{Acknowledgments}
The author thanks Monika Ludwig for  insightful discussions and careful reading of the manuscript. The author also thanks Mohamed A. Mouamine for carefully reading the manuscript. This project was supported, in part, by the Austrian Science Fund (FWF) Grant-DOI: 10.55776/P34446 and Grant-DOI: 10.55776/P37030. For open access purposes, the author has applied a CC BY public copyright license to any author-accepted manuscript version arising from this submission.

\bibliography{ap}
\bibliographystyle{amsplain}
\end{document}